\newtheorem{lemma}{Lemma}
\newtheorem{proposition}{Proposition}
\newtheorem{corollary}{Corollary}
\newtheorem{remark}{Remark}
\newtheorem{definition}{Definition}
\title{Classification of Compatible Parabolic Subalgebras for Complex Symmetric Pairs}
\author{He Haian\\Department of Mathematics\\The Hong Kong University of Science and Technology}
\date{}
\begin{document}

\maketitle

\textbf{Abstract:} In this paper, we shall recall and arrange the relationship between hyperbolic elements and parabolic subalgebras at first. And then, we shall classify all the compatible parabolic subalgebras containing a $\tau$-stable Borel subalgebra for complex symmetric pair $(\mathfrak{g},\mathfrak{g}^\tau)$, where $\tau$ is a involutive automorphism of $\mathfrak{g}$.

\section{Background}

The compatibility of a parabolic subalgebra plays a very important role in representation theory. Restriction to a reductive Lie subalgebra of a generalized Verma module attached to a compatible parabolic subalgebra decomposes in the Grothendieck group with each summand finite multiplicity (Theorem 3.10 [\textbf{Ko}]). For complex symmetric pairs, T.Matsuki gave a very strong equivalent condition when a Borel subalgebra is $\mathfrak{g}^\tau$-compatible (Proposition 11 [\textbf{M}]), and T.Kobayashi generalized it; that is, a parabolic subalgebra $\mathfrak{p}$ of $\mathfrak{g}$ is $\mathfrak{g}^\tau$-compatible if and only if $\tau\mathfrak{p}=\mathfrak{p}$ (Lemma 4.5(2) [\textbf{Ko}]).

There are many books and articles studying symmetric space such as [\textbf{B}], [\textbf{C1}], [\textbf{C2}], [\textbf{H}], [\textbf{L1}] and [\textbf{L2}]. It is known that each complex symmetric space is a product of symmetric spaces of the form $G\times G/G$ such that $G$ is a complex connected simple Lie group and is diagonally embedded into $G\times G$, with symmetric spaces of the form $G/H$ such that $G$ is a complex connected simple Lie group and $H$ is its reductive subgroup. We shall make use of it together with the concept of hyperbolic element to classify all the $\mathfrak{g}^\tau$-compatible parabolic subalgebras containing a $\tau$-stable Borel subalgebra for complex symmetric pairs of this two types.

\section{Hyperbolic Element}

Suppose that $\mathfrak{g}$ is a complex semi-simple Lie algebra.
\begin{definition}
A semi-simple element $H\in\mathfrak{g}$ is said to be hyperbolic if the eigenvalues of $\mathrm{ad}(H)$ are all real.
\end{definition}
For a hyperbolic element $H$, we define the subalgebras
\begin{center}
$\mathfrak{u}_+\equiv\mathfrak{u}_+(H)$, $\mathfrak{l}\equiv\mathfrak{l}(H)$, $\mathfrak{u}_-\equiv\mathfrak{u}_-(H)$
\end{center}
as the sum of the eigenspaces with positive, zero, and negative eigenvalues, respectively. Then
\begin{center}
$\mathfrak{p}(H):=\mathfrak{l}(H)+\mathfrak{u}_+(H)$
\end{center}
is a Levi decomposition of a parabolic subalgebra of $\mathfrak{g}$.

Fix a Cartan subalgebra $\mathfrak{h}$ and a simple system $\Delta$ of $\mathfrak{g}$ which gives a positive system $\Phi^+$. Moreover, for $\alpha\in\Delta$, denote $T_\alpha$ to be the element in $\mathfrak{h}$ such that $\alpha(T_\alpha)$=1 and $\beta(T_\alpha)=0$ for all $\beta\in\Delta-\{\alpha\}$. By definition, the hyperbolic elements in $\mathfrak{h}$ are exactly \[\{\displaystyle{\sum_{\alpha\in\Delta}}c_\alpha T_\alpha\mid c_\alpha\in\mathbb{R}\}.\]

Because a standard parabolic subalgebra is determined by a subset of the simple system and a Borel subalgebra which it contains (Lemma 3.8.1(ii) [\textbf{CM}] or Proposition 5.90 [\textbf{K}]), we denote $\mathfrak{p}_\Pi$ to be the standard parabolic subalgebra corresponding to the subset $\Pi\subseteq\Delta(\mathfrak{g})$; namely, \[\mathfrak{p}_\Pi=\mathfrak{h}_\mathfrak{g}+\displaystyle{\sum_{\alpha\in\Phi^+}}\mathfrak{g}_\alpha+ \displaystyle{\sum_{\alpha\in\mathbb{N}\Pi\cap\Phi^+}}\mathfrak{g}_{-\alpha}\] where $\mathfrak{g}_\alpha$ is the root space of the root $\alpha$ and $\mathbb{N}$ is the set of non-negative integers.

If $H$ is a hyperbolic element in $\mathfrak{h}$, it gives a unique standard parabolic subalgebra $\mathfrak{p}(H)$ of $\mathfrak{g}$. However, a standard parabolic subalgebra of $\mathfrak{g}$ may correspond to a subset of hyperbolic elements in $\mathfrak{h}$. More precisely, for $\Pi\subseteq\Delta$, let's denote $\mathcal{H}(\mathfrak{h},\Delta,\mathfrak{p}_\Pi)$ to be the set of all the hyperbolic elements in $\mathfrak{h}$ which define the standard parabolic subalgebra $\mathfrak{p}_\Pi$ with respect to the simple system $\Delta$.
\begin{proposition}
$\mathcal{H}(\mathfrak{h},\Delta,\mathfrak{p}_\Pi)=\{\displaystyle{\sum_{\alpha\in\Delta-\Pi}}k_\alpha T_\alpha\mid k_\alpha\in\mathbb{R}^+\}$.
\end{proposition}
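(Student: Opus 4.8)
The plan is to compute $\mathfrak{p}(H)$ explicitly for an arbitrary hyperbolic element $H=\sum_{\alpha\in\Delta}c_\alpha T_\alpha$ and to match it against the combinatorial description of $\mathfrak{p}_\Pi$ recorded above. The starting observation is that $\mathrm{ad}(H)$ acts on a root space $\mathfrak{g}_\beta$ by the scalar $\beta(H)$, so I would first record that if $\beta=\sum_{\alpha\in\Delta}n_\alpha\alpha$ then $\beta(H)=\sum_{\alpha\in\Delta}n_\alpha c_\alpha$, using $\alpha(T_\gamma)=\delta_{\alpha\gamma}$. Consequently $\mathfrak{u}_+(H)$, $\mathfrak{l}(H)$, $\mathfrak{u}_-(H)$ are the sums of the root spaces $\mathfrak{g}_\beta$ with $\beta(H)$ positive, zero, and negative, and $\mathfrak{p}(H)=\mathfrak{h}+\sum_{\beta(H)\ge 0}\mathfrak{g}_\beta$. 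Since $\{T_\alpha\}_{\alpha\in\Delta}$ is a basis of $\mathfrak{h}$, the correspondence $H\mapsto(c_\alpha)$ is a bijection, so it suffices to translate the condition $\mathfrak{p}(H)=\mathfrak{p}_\Pi$ into conditions on the tuple $(c_\alpha)$.

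Next I would impose $\mathfrak{p}(H)=\mathfrak{p}_\Pi$ and extract the constraints in two steps. Because $\mathfrak{p}_\Pi$ contains the Borel subalgebra $\mathfrak{b}=\mathfrak{h}+\sum_{\beta\in\Phi^+}\mathfrak{g}_\beta$, every positive root space must lie in $\mathfrak{p}(H)$, i.e.\ $\beta(H)\ge 0$ for all $\beta\in\Phi^+$; evaluating on simple roots gives $c_\alpha=\alpha(H)\ge 0$ for every $\alpha\in\Delta$, and conversely $c_\alpha\ge 0$ forces $\beta(H)=\sum n_\alpha c_\alpha\ge 0$ for all $\beta\in\Phi^+$ since each $n_\alpha\ge 0$. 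Thus the condition that $\mathfrak{p}(H)$ is standard with respect to $\Delta$ is precisely $c_\alpha\ge 0$ for all $\alpha$.

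Granting $c_\alpha\ge 0$, I would then determine which negative root spaces appear. For $\beta\in\Phi^+$ one has $\mathfrak{g}_{-\beta}\subseteq\mathfrak{p}(H)$ iff $(-\beta)(H)\ge 0$ iff $\beta(H)\le 0$, which together with $\beta(H)\ge 0$ forces $\beta(H)=0$. As $\beta(H)=\sum n_\alpha c_\alpha$ is a sum of non-negative terms, it vanishes exactly when $c_\alpha=0$ for every $\alpha$ in the support of $\beta$, i.e.\ when $\beta\in\mathbb{N}\Pi_0\cap\Phi^+$ where $\Pi_0:=\{\alpha\in\Delta\mid c_\alpha=0\}$. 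Hence $\mathfrak{p}(H)=\mathfrak{p}_{\Pi_0}$, so $\mathfrak{p}(H)=\mathfrak{p}_\Pi$ if and only if $\Pi_0=\Pi$, namely $c_\alpha=0$ for $\alpha\in\Pi$ and $c_\alpha>0$ for $\alpha\in\Delta-\Pi$. Renaming $k_\alpha=c_\alpha$ yields the claimed set.

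The only genuinely delicate point, and the place I expect to have to argue carefully, is the sign bookkeeping in the final step: I must use simultaneously that the coefficients $n_\alpha$ of a positive root are non-negative and that the constraint $c_\alpha\ge 0$ has already been secured, so that the identity $\sum n_\alpha c_\alpha=0$ can be concluded term by term and translated into the support condition defining $\mathfrak{p}_\Pi$. Everything else is a direct dictionary between the eigenvalue condition defining $\mathfrak{p}(H)$ and the root-support condition defining $\mathfrak{p}_\Pi$, and the bijectivity of $H\mapsto(c_\alpha)$ ensures the two resulting inclusions pin down the set exactly.
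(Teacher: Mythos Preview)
Your proof is correct and follows essentially the same root-theoretic computation as the paper: both arguments evaluate simple roots on $H=\sum c_\alpha T_\alpha$ and translate the condition $\mathfrak{p}(H)=\mathfrak{p}_\Pi$ into $c_\alpha=0$ for $\alpha\in\Pi$ and $c_\alpha>0$ for $\alpha\in\Delta-\Pi$. The only organizational difference is in the reverse inclusion: the paper first invokes uniqueness of the nilpotent radical and a dimension count to force $\mathfrak{l}(H)=\mathfrak{l}_\Pi$ and $\mathfrak{u}_+(H)=\mathfrak{u}_+$, and then reads off the constraints on the coefficients, whereas you bypass those structural facts by directly computing $\mathfrak{p}(H)=\mathfrak{p}_{\Pi_0}$ with $\Pi_0=\{\alpha\in\Delta\mid c_\alpha=0\}$ and matching $\Pi_0=\Pi$. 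Your route is slightly more self-contained, but the underlying idea is identical.
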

\begin{proof}
It is already known that, $\displaystyle{\sum_{\alpha\in\Delta-\Pi}}k_\alpha T_\alpha$ with $k_\alpha\in\mathbb{R}^+$ is a hyperbolic element in $\mathfrak{h}$. Let $\mathfrak{p}_\Pi=\mathfrak{l}_\Pi+\mathfrak{u}_+$ be the standard Levi decomposition. If $\beta\in\Phi^+(\mathfrak{l}_\Pi)$, $\beta$ is a positive integer combination of elements in $\Pi$, so $\beta(\displaystyle{\sum_{\alpha\in\Delta-\Pi}}k_\alpha T_\alpha)=0$. Else, if $\beta\in\Phi^+-\Phi^+(\mathfrak{l}_\Pi)$, $\beta$ is a positive integer combination of elements in $\Delta$ with at least one element in $\Delta-\Pi$, and then $\beta(\displaystyle{\sum_{\alpha\in\Delta-\Pi}}k_\alpha T_\alpha)>0$ since all $k_\alpha\in\mathbb{R}^+$. Also, $\mathrm{ad}\displaystyle{\sum_{\alpha\in\Delta-\Pi}}k_\alpha T_\alpha(\mathfrak{h})=0$. Hence, $\mathfrak{l}_\Pi=\mathfrak{l}(\displaystyle{\sum_{\alpha\in\Delta-\Pi}}k_\alpha T_\alpha)$ and $\mathfrak{u}_+=\mathfrak{u}_+(\displaystyle{\sum_{\alpha\in\Delta-\Pi}}k_\alpha T_\alpha)$. Thus, $\mathfrak{p}_\Pi=\mathfrak{p}(\displaystyle{\sum_{\alpha\in\Delta-\Pi}}k_\alpha T_\alpha)$. On the other hand, if $\mathfrak{p}_\Pi=\mathfrak{p}(H)$ for some hyperbolic element $H$ in $\mathfrak{h}$, write $H=\displaystyle{\sum_{\alpha\in\Delta}}k_\alpha T_\alpha$ with all $k_\alpha\in\mathbb{R}$. Let $\mathfrak{p}=\mathfrak{p}(H)=\mathfrak{l}(H)+\mathfrak{u}_+(H)$ be the Levi decomposition with respect to $H$. Because of the uniqueness of nilpotent radical, $\mathfrak{u}_+=\mathfrak{u}_+(H)$. Moveover, if $\beta\in\Phi^+(\mathfrak{l}_\Pi)$, $\mathfrak{g}_\beta$ lies in some eigenspace of $H$, and this eigenvalue must be 0, or it contradicts to $\mathfrak{u}_+=\mathfrak{u}_+(H)$. Hence, $\mathfrak{l}_\Pi\subseteq\mathfrak{l}(H)$, and compute the dimension, it is actually $\mathfrak{l}_\Pi=\mathfrak{l}(H)$. If $\beta\in\Pi$, $\mathfrak{g}_\beta\subseteq\mathfrak{l}_\Pi$, so $\mathrm{ad}H(\mathfrak{g}_\beta)=0$, and hence $k_\beta=\beta(H)=0$. Else, if $\beta\in\Delta-\pi$, $\mathfrak{g}_\beta\subseteq\mathfrak{u}_+$, so $\mathfrak{g}_\beta$ lies in a positive eigenspace of $\mathrm{ad}H$, and hence $k_\beta=\beta(H)>0$. Therefore, $H=\displaystyle{\sum_{\alpha\in\Delta-\Pi}}k_\alpha T_\alpha$ with all $k_\alpha\in\mathbb{R}^+$. This completes the proof.
\end{proof}
For a parabolic subalgebra $\mathfrak{p}$, we denote $\mathcal{H}(\mathfrak{p})$ to be the set of all the hyperbolic elements in $\mathfrak{g}$ which define $\mathfrak{p}$; namely, \[\mathcal{H}(\mathfrak{p})=\{H\in\mathfrak{g}\mid H\mathrm{hyperbolic},\mathfrak{p}=\mathfrak{p}(H)\}.\] By definition, $\mathcal{H}(\mathfrak{h},\Delta,\mathfrak{p}_\Pi)\subseteq\mathcal{H}(\mathfrak{p}_\Pi)$ and $\mathcal{H}(\mathfrak{p})\subseteq\mathfrak{p}$.

Write $G=\mathrm{Int}(\mathfrak{g})$, $P$ the parabolic subgroup of $G$ with Lie algebra $\mathfrak{p}$. Moreover, let \[P=LU_+=U_+L\] be the Levi decomposition in the group level corresponding to the Levi decomposition $\mathfrak{p}=\mathfrak{l}+\mathfrak{u}_+$ with $L$ reductive subgroup and $U_+$ unipotent radical.
\begin{lemma}
If $H\in\mathcal{H}(\mathfrak{p})$, then $\mathrm{Ad}(g)H\in\mathcal{H}(\mathrm{Ad}(g)\mathfrak{p})$ for $g\in G$.
\end{lemma}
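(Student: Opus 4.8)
The plan is to reduce everything to the single intertwining identity $\mathrm{ad}(\mathrm{Ad}(g)X)=\mathrm{Ad}(g)\,\mathrm{ad}(X)\,\mathrm{Ad}(g)^{-1}$, which holds because $\mathrm{Ad}(g)$ is a Lie algebra automorphism of $\mathfrak{g}$ for every $g\in G$. Two things must then be checked: that $\mathrm{Ad}(g)H$ is again hyperbolic, and that the parabolic subalgebra it defines is precisely $\mathrm{Ad}(g)\mathfrak{p}$. Everything else is formal bookkeeping with eigenspaces.

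First I would record that, by the identity above, the operator $\mathrm{ad}(\mathrm{Ad}(g)H)$ is conjugate, via the invertible linear map $\mathrm{Ad}(g)$, to $\mathrm{ad}(H)$. Conjugation preserves both semisimplicity and the spectrum, so $\mathrm{ad}(\mathrm{Ad}(g)H)$ is semisimple with all eigenvalues real; hence $\mathrm{Ad}(g)H$ is hyperbolic. This disposes of the first requirement and is where the definition of hyperbolicity is used.

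Next I would track the eigenspaces. Writing $\mathfrak{g}_\lambda(H)$ for the $\lambda$-eigenspace of $\mathrm{ad}(H)$, a one-line computation from the intertwining identity shows that $X\in\mathfrak{g}_\lambda(H)$ implies $\mathrm{Ad}(g)X\in\mathfrak{g}_\lambda(\mathrm{Ad}(g)H)$, and since $\mathrm{Ad}(g)$ is bijective this gives $\mathfrak{g}_\lambda(\mathrm{Ad}(g)H)=\mathrm{Ad}(g)\,\mathfrak{g}_\lambda(H)$ for every real $\lambda$. Summing over $\lambda>0$ and over $\lambda=0$ respectively yields $\mathfrak{u}_+(\mathrm{Ad}(g)H)=\mathrm{Ad}(g)\,\mathfrak{u}_+(H)$ and $\mathfrak{l}(\mathrm{Ad}(g)H)=\mathrm{Ad}(g)\,\mathfrak{l}(H)$, whence $\mathfrak{p}(\mathrm{Ad}(g)H)=\mathfrak{l}(\mathrm{Ad}(g)H)+\mathfrak{u}_+(\mathrm{Ad}(g)H)=\mathrm{Ad}(g)\,\mathfrak{p}(H)=\mathrm{Ad}(g)\mathfrak{p}$, as desired.

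I do not expect any serious obstacle here; the content is entirely formal once the intertwining identity is in hand. The only point demanding a little care is to confirm that the eigenvalue $\lambda$ is genuinely unchanged, not merely that the eigenspaces are permuted among one another, which is immediate from the displayed computation; this is exactly what guarantees that the sign conditions defining $\mathfrak{u}_+$, $\mathfrak{l}$, and $\mathfrak{u}_-$ are respected under $\mathrm{Ad}(g)$.
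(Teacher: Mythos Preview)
Your proof is correct and follows essentially the same approach as the paper: both argue that $\mathrm{Ad}(g)H$ is hyperbolic and that $\mathrm{Ad}(g)\mathfrak{l}(H)=\mathfrak{l}(\mathrm{Ad}(g)H)$ and $\mathrm{Ad}(g)\mathfrak{u}_+(H)=\mathfrak{u}_+(\mathrm{Ad}(g)H)$, whence $\mathfrak{p}(\mathrm{Ad}(g)H)=\mathrm{Ad}(g)\mathfrak{p}$. The only difference is that you spell out the intertwining identity and the eigenspace computation explicitly, whereas the paper simply asserts these facts as ``immediate to check by definition.''
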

\begin{proof}
If $H\in\mathcal{H}(\mathfrak{p})$, it gives a Levi decomposition $\mathfrak{p}=\mathfrak{l}(H)+\mathfrak{u}_+(H)$. For arbitrary $g\in G$, $\mathrm{Ad}(g)H$ is also a hyperbolic element in $\mathfrak{g}$ by definition, and one obtains another Levi decomposition $\mathrm{Ad}(g)\mathfrak{p}=\mathrm{Ad}(g)\mathfrak{l}(H)+\mathrm{Ad}(g)\mathfrak{u}_+(H)$. It is immediate to check by definition that $\mathrm{Ad}(g)\mathfrak{l}(H)=\mathfrak{l}(\mathrm{Ad}(g)H)$ and $\mathrm{Ad}(g)\mathfrak{u}_+(H)=\mathfrak{u}_+(\mathrm{Ad}(g)H)$. Therefore, $\mathrm{Ad}(g)\mathfrak{p}=\mathfrak{l}(\mathrm{Ad}(p)H)+\mathfrak{u}_+(\mathrm{Ad}(p)H)=\mathfrak{p}(\mathrm{Ad}(g)H)$, and then $\mathrm{Ad}(g)H\in\mathcal{H}(\mathrm{Ad}(g)\mathfrak{p})$.
\end{proof}
\begin{lemma}
If $H\in\mathcal{H}(\mathfrak{p})$, then $\mathrm{Ad}(p)H\in\mathcal{H}(\mathfrak{p})$ for $p\in P$.
\end{lemma}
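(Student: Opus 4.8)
The plan is to reduce this to the preceding lemma, which already gives $\mathrm{Ad}(g)H\in\mathcal{H}(\mathrm{Ad}(g)\mathfrak{p})$ for every $g\in G$. Specializing $g$ to an element $p\in P$ yields $\mathrm{Ad}(p)H\in\mathcal{H}(\mathrm{Ad}(p)\mathfrak{p})$, so the entire content of the lemma collapses to the single assertion that $P$ leaves $\mathfrak{p}$ invariant under the adjoint action, i.e. $\mathrm{Ad}(p)\mathfrak{p}=\mathfrak{p}$ for all $p\in P$.

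To establish this invariance, I would invoke the general fact that any Lie subgroup stabilizes its own Lie algebra under the adjoint representation. Concretely, for $p\in P$ the inner automorphism $c_p\colon x\mapsto pxp^{-1}$ maps $P$ to $P$ and fixes the identity; differentiating at the identity shows that $\mathrm{Ad}(p)=(dc_p)_e$ carries $T_eP=\mathfrak{p}$ onto itself, whence $\mathrm{Ad}(p)\mathfrak{p}=\mathfrak{p}$. Combining this with the previous paragraph gives $\mathrm{Ad}(p)H\in\mathcal{H}(\mathrm{Ad}(p)\mathfrak{p})=\mathcal{H}(\mathfrak{p})$, as desired.

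Alternatively, and more in keeping with the explicit Levi decomposition $P=LU_+$ set up just above, I could verify the invariance factor by factor. For $l\in L$ one has $[\mathfrak{l},\mathfrak{l}]\subseteq\mathfrak{l}$ and $[\mathfrak{l},\mathfrak{u}_+]\subseteq\mathfrak{u}_+$, so $[\mathfrak{l},\mathfrak{p}]\subseteq\mathfrak{p}$ and hence $\mathrm{Ad}(l)\mathfrak{p}=\mathfrak{p}$; for $u\in U_+$, writing $\mathrm{Ad}(u)=\exp(\mathrm{ad}X)$ with $X\in\mathfrak{u}_+$ and using that $\mathfrak{u}_+$ is an ideal of $\mathfrak{p}$ (so that $\mathrm{ad}X$ maps $\mathfrak{p}$ into $\mathfrak{u}_+\subseteq\mathfrak{p}$), one again obtains $\mathrm{Ad}(u)\mathfrak{p}=\mathfrak{p}$. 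As every element of $P$ is a product of such factors, the invariance follows.

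I do not expect a serious obstacle here, since the statement is essentially immediate once the normalizer property $\mathrm{Ad}(p)\mathfrak{p}=\mathfrak{p}$ is in hand. The only point deserving a moment's care is to use the definition of $P$ as the parabolic subgroup with Lie algebra $\mathfrak{p}$: because $G=\mathrm{Int}(\mathfrak{g})$ is connected, $P$ is the connected parabolic subgroup attached to $\mathfrak{p}$, equivalently $P=N_G(\mathfrak{p})$, and the invariance then holds for all of $P$ rather than merely for its identity component.
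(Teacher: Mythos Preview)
Your approach is correct and is essentially identical to the paper's: the paper's proof simply says that since $P$ normalizes $\mathfrak{p}$ one has $\mathrm{Ad}(p)\mathfrak{p}=\mathfrak{p}$, and then the conclusion follows immediately from the preceding lemma. Your extra justification of the normalizer property (via differentiation of conjugation, or via the Levi decomposition) is more detailed than what the paper writes but adds nothing new to the argument.
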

\begin{proof}
Because $P$ normalizes $\mathfrak{p}$, $\mathrm{Ad}(p)\mathfrak{p}=\mathfrak{p}$ for $p\in P$. The conclusion holds immediately by Lemma 1.
\end{proof}
\begin{proposition}
$\mathcal{H}(\mathfrak{p}_\Pi)=\mathrm{Ad}(U_+)\mathcal{H}(\mathfrak{h},\Delta,\mathfrak{p}_\Pi):=\{\mathrm{Ad}(u)H\mid u\in U_+,H\in\mathcal{H}(\mathfrak{h},\Delta,\mathfrak{p}_\Pi)\}$.
\end{proposition}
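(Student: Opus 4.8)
The plan is to establish the two inclusions separately, with the containment $\mathrm{Ad}(U_+)\mathcal{H}(\mathfrak{h},\Delta,\mathfrak{p}_\Pi)\subseteq\mathcal{H}(\mathfrak{p}_\Pi)$ being essentially free. Indeed, by the remark preceding the statement we have $\mathcal{H}(\mathfrak{h},\Delta,\mathfrak{p}_\Pi)\subseteq\mathcal{H}(\mathfrak{p}_\Pi)$, and since $U_+\subseteq P$, Lemma 2 shows at once that $\mathrm{Ad}(u)H\in\mathcal{H}(\mathfrak{p}_\Pi)$ whenever $u\in U_+$ and $H\in\mathcal{H}(\mathfrak{h},\Delta,\mathfrak{p}_\Pi)$. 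So the whole content lies in the reverse inclusion.

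For that, I would take an arbitrary $H\in\mathcal{H}(\mathfrak{p}_\Pi)$ and aim to produce $u\in U_+$ and $H_0\in\mathcal{H}(\mathfrak{h},\Delta,\mathfrak{p}_\Pi)$ with $H=\mathrm{Ad}(u)H_0$. By definition $H$ is hyperbolic with $\mathfrak{p}(H)=\mathfrak{p}_\Pi$, so its zero-eigenspace $\mathfrak{l}(H)$ (which equals the centralizer $\mathfrak{z}_\mathfrak{g}(H)$ and contains $H$) is a Levi factor of $\mathfrak{p}_\Pi$, sharing the nilradical $\mathfrak{u}_+$ with the standard decomposition $\mathfrak{p}_\Pi=\mathfrak{l}_\Pi+\mathfrak{u}_+$. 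The key structural input is that any two Levi factors of a parabolic subalgebra are conjugate by an element of its unipotent radical; applying this to $\mathfrak{l}(H)$ and $\mathfrak{l}_\Pi$ furnishes $u\in U_+$ with $\mathrm{Ad}(u)\mathfrak{l}(H)=\mathfrak{l}_\Pi$.

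It then remains to verify that $H_0:=\mathrm{Ad}(u)H$ lies in $\mathcal{H}(\mathfrak{h},\Delta,\mathfrak{p}_\Pi)$. By Lemma 1, $\mathfrak{l}(H_0)=\mathrm{Ad}(u)\mathfrak{l}(H)=\mathfrak{l}_\Pi$, which means $H_0$ centralizes all of $\mathfrak{l}_\Pi$ while itself lying in $\mathfrak{l}_\Pi$; hence $H_0\in Z(\mathfrak{l}_\Pi)$. Since $Z(\mathfrak{l}_\Pi)=\{X\in\mathfrak{h}\mid\alpha(X)=0\text{ for all }\alpha\in\Pi\}\subseteq\mathfrak{h}$, we conclude $H_0\in\mathfrak{h}$. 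As $H_0$ is an $\mathrm{Ad}(G)$-conjugate of the hyperbolic element $H$, it is again hyperbolic, and Lemma 2 (with $u\in U_+\subseteq P$) gives $\mathfrak{p}(H_0)=\mathfrak{p}_\Pi$. Being a hyperbolic element of $\mathfrak{h}$ defining $\mathfrak{p}_\Pi$, $H_0$ belongs to $\mathcal{H}(\mathfrak{h},\Delta,\mathfrak{p}_\Pi)$ by definition. Writing $H=\mathrm{Ad}(u^{-1})H_0$ with $u^{-1}\in U_+$ then finishes the inclusion.

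I expect the one genuinely nontrivial step to be the conjugacy of Levi factors under $U_+$: this is where I would either invoke the standard structure theory of parabolic subalgebras or supply a short direct argument, whereas everything else is bookkeeping with Lemmas 1 and 2 together with the elementary identification of $Z(\mathfrak{l}_\Pi)$ as a subspace of $\mathfrak{h}$.
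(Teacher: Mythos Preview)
Your argument is correct, but it follows a different line from the paper's. The paper first proves the weaker equality $\mathcal{H}(\mathfrak{p}_\Pi)=\mathrm{Ad}(P_\Pi)\mathcal{H}(\mathfrak{h},\Delta,\mathfrak{p}_\Pi)$ by conjugating an arbitrary semi\-simple $H\in\mathcal{H}(\mathfrak{p}_\Pi)$ into $\mathfrak{h}$ by some $p\in P_\Pi$ (using that every semi\-simple element of $\mathfrak{p}_\Pi$ is $P_\Pi$-conjugate into the fixed Cartan), and only afterwards strips $P_\Pi$ down to $U_+$ via the observation that $L_\Pi$ centralizes $\mathcal{H}(\mathfrak{h},\Delta,\mathfrak{p}_\Pi)$ (so $\mathrm{Ad}(U_+L_\Pi)=\mathrm{Ad}(U_+)$ on that set). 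You instead go straight for an element of $U_+$ by invoking the conjugacy of Levi factors under the unipotent radical, and then recover $H_0\in\mathfrak{h}$ from $H_0\in Z(\mathfrak{l}_\Pi)\subseteq\mathfrak{h}$. Your route is a touch more conceptual and lands in $U_+$ in one step, at the cost of citing the Levi-conjugacy theorem; the paper's route uses only conjugacy of semi\-simple elements into a Cartan plus the elementary centralizing remark, which is arguably lighter machinery. Either way the bookkeeping with Lemmas~1 and~2 is the same.
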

\begin{proof}
$\mathcal{H}(\mathfrak{p}_\Pi)\supseteq\mathrm{Ad}(P_\Pi)\mathcal{H}(\mathfrak{h},\Delta,\mathfrak{p}_\Pi)$ is obvious by Lemma 2. On the other hand, if $H\in\mathcal{H}(\mathfrak{p}_\Pi)$, $H$ is a semi-simple element in $\mathfrak{p}_\Pi$. Hence, there exists an element $p\in P_\Pi$ such that $\mathrm{Ad}(p)H\in\mathfrak{h}$. Again by Lemma 2, $\mathrm{Ad}(p)H\in\mathcal{H}(\mathfrak{p}_\Pi)$. Therefore, $\mathrm{Ad}(p)H\in\mathcal{H}(\mathfrak{h},\Delta,\mathfrak{p}_\Pi)$, and $H\in\mathrm{Ad}(p^{-1})\mathcal{H}(\mathfrak{h},\Delta,\mathfrak{p}_\Pi)\subseteq\mathrm{Ad}(P_\Pi)\mathcal{H}(\mathfrak{h},\Delta,\mathfrak{p}_\Pi)$, so $\mathcal{H}(\mathfrak{p}_\Pi)\subseteq\mathrm{Ad}(P_\Pi)\mathcal{H}(\mathfrak{h},\Delta,\mathfrak{p}_\Pi)$. It is concluded that $\mathcal{H}(\mathfrak{p}_\Pi)=\mathrm{Ad}(P_\Pi)\mathcal{H}(\mathfrak{h},\Delta,\mathfrak{p}_\Pi)$. Because $[\mathfrak{l}_\Pi,\mathcal{H}(\mathfrak{h},\Delta,\mathfrak{p}_\Pi)]=0$, it follows that $\mathrm{Ad}(L_\Pi)(H)=H$ for all $H\in\mathcal{H}(\mathfrak{h},\Delta,\mathfrak{p}_\Pi)$. Therefore, $\mathcal{H}(\mathfrak{p}_\Pi)= \mathrm{Ad}(U_+L_\Pi)\mathcal{H}(\mathfrak{h},\Delta,\mathfrak{p}_\Pi)=\mathrm{Ad}(U_+)\mathcal{H}(\mathfrak{h},\Delta,\mathfrak{p}_\Pi)$.
\end{proof}
\begin{corollary}
$\mathcal{H}(\mathfrak{p}_\Pi)=\{\displaystyle{\sum_{\alpha\in\Delta-\Pi}}k_\alpha\mathrm{Ad}(u)T_\alpha\mid u\in U_+,k_\alpha\in\mathbb{R}^+\}$.
\end{corollary}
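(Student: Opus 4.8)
The plan is to read this off directly by composing the two preceding results, since the corollary is essentially a reformulation of Proposition 2 once the explicit description of $\mathcal{H}(\mathfrak{h},\Delta,\mathfrak{p}_\Pi)$ from Proposition 1 is substituted in. First I would invoke Proposition 2 to write
\[
\mathcal{H}(\mathfrak{p}_\Pi)=\mathrm{Ad}(U_+)\,\mathcal{H}(\mathfrak{h},\Delta,\mathfrak{p}_\Pi)
=\{\mathrm{Ad}(u)H\mid u\in U_+,\ H\in\mathcal{H}(\mathfrak{h},\Delta,\mathfrak{p}_\Pi)\}.
\]
This reduces everything to understanding the inner set, which is precisely what Proposition 1 computes.

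Next I would substitute the explicit form $\mathcal{H}(\mathfrak{h},\Delta,\mathfrak{p}_\Pi)=\{\sum_{\alpha\in\Delta-\Pi}k_\alpha T_\alpha\mid k_\alpha\in\mathbb{R}^+\}$ from Proposition 1. A generic element $H$ of the inner set is then $H=\sum_{\alpha\in\Delta-\Pi}k_\alpha T_\alpha$ with each $k_\alpha\in\mathbb{R}^+$, so a generic element of $\mathcal{H}(\mathfrak{p}_\Pi)$ has the form $\mathrm{Ad}(u)\bigl(\sum_{\alpha\in\Delta-\Pi}k_\alpha T_\alpha\bigr)$ for some $u\in U_+$.

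The only remaining step is to move the adjoint action inside the sum. Since $\mathrm{Ad}(u)$ is a linear automorphism of $\mathfrak{g}$, we have $\mathrm{Ad}(u)\bigl(\sum_{\alpha\in\Delta-\Pi}k_\alpha T_\alpha\bigr)=\sum_{\alpha\in\Delta-\Pi}k_\alpha\,\mathrm{Ad}(u)T_\alpha$, which yields exactly the claimed description. I do not anticipate any genuine obstacle here: the corollary is an immediate consequence of chaining Proposition 2 into Proposition 1, and the single substantive manipulation is the linearity of $\mathrm{Ad}(u)$. The only point worth a word of care is that the parameter ranges ($u\in U_+$ and $k_\alpha\in\mathbb{R}^+$) are carried through unchanged, so no genuine verification of equality of sets beyond the two cited propositions is needed.
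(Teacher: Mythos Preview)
Your proposal is correct and follows exactly the paper's approach: the paper's proof simply states that the conclusion is immediate from Proposition~1 and Proposition~2, which is precisely the chaining you carry out (with the linearity of $\mathrm{Ad}(u)$ made explicit).
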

\begin{proof}
By Proposition 1 and Proposition 2, the conclusion is proved immediately.
\end{proof}
\begin{lemma}
$\mathcal{H}(\mathrm{Ad}(g)\mathfrak{p})=\mathrm{Ad}(g)\mathcal{H}(\mathfrak{p})$.
\end{lemma}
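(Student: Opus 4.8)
The plan is to deduce this set equality directly from Lemma 1, which already records that conjugation by $g$ carries hyperbolic elements defining $\mathfrak{p}$ to hyperbolic elements defining $\mathrm{Ad}(g)\mathfrak{p}$. Since the claim is an equality of two subsets of $\mathfrak{g}$, I would prove it by establishing the two inclusions separately, and both will turn out to be applications of the same lemma.

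For the inclusion $\mathrm{Ad}(g)\mathcal{H}(\mathfrak{p})\subseteq\mathcal{H}(\mathrm{Ad}(g)\mathfrak{p})$, I would simply restate Lemma 1: any element of $\mathrm{Ad}(g)\mathcal{H}(\mathfrak{p})$ has the form $\mathrm{Ad}(g)H$ with $H\in\mathcal{H}(\mathfrak{p})$, and Lemma 1 says precisely that $\mathrm{Ad}(g)H\in\mathcal{H}(\mathrm{Ad}(g)\mathfrak{p})$. This direction requires no new idea.

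For the reverse inclusion $\mathcal{H}(\mathrm{Ad}(g)\mathfrak{p})\subseteq\mathrm{Ad}(g)\mathcal{H}(\mathfrak{p})$, the idea is to apply Lemma 1 to the element $g^{-1}\in G$ and the parabolic subalgebra $\mathrm{Ad}(g)\mathfrak{p}$. Given $K\in\mathcal{H}(\mathrm{Ad}(g)\mathfrak{p})$, Lemma 1 yields $\mathrm{Ad}(g^{-1})K\in\mathcal{H}\bigl(\mathrm{Ad}(g^{-1})\mathrm{Ad}(g)\mathfrak{p}\bigr)$. Using the homomorphism property $\mathrm{Ad}(g^{-1})\mathrm{Ad}(g)=\mathrm{Ad}(e)=\mathrm{id}$, the right-hand side simplifies to $\mathcal{H}(\mathfrak{p})$, so $\mathrm{Ad}(g^{-1})K\in\mathcal{H}(\mathfrak{p})$. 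Applying $\mathrm{Ad}(g)$ then gives $K=\mathrm{Ad}(g)\bigl(\mathrm{Ad}(g^{-1})K\bigr)\in\mathrm{Ad}(g)\mathcal{H}(\mathfrak{p})$, as desired.

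There is essentially no real obstacle here: the statement is a formal consequence of Lemma 1 together with the fact that $\mathrm{Ad}$ is a group homomorphism, so $\mathrm{Ad}(g)$ is a bijection on $\mathfrak{g}$ with inverse $\mathrm{Ad}(g^{-1})$. The only point that needs care is to invoke the lemma with the correct element (namely $g^{-1}$ rather than $g$) in the second inclusion and to verify the cancellation $\mathrm{Ad}(g^{-1})\mathrm{Ad}(g)\mathfrak{p}=\mathfrak{p}$ explicitly, after which the two inclusions combine to give the claimed equality.
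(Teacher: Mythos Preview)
Your proposal is correct and follows essentially the same approach as the paper: both obtain one inclusion directly from Lemma~1 and the other by the symmetric application with $g^{-1}$ in place of $g$. (The paper's proof actually cites Lemma~2 here, which appears to be a typo for Lemma~1; your identification of Lemma~1 as the relevant input is the right one.)
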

\begin{proof}
By Lemma 2, $\mathcal{H}(\mathrm{Ad}(g)\mathfrak{p})\supseteq\mathrm{Ad}(g)\mathcal{H}(\mathfrak{p})$, and $\mathcal{H}(\mathrm{Ad}(g)\mathfrak{p})\subseteq\mathrm{Ad}(g)\mathcal{H}(\mathfrak{p})$ symmetrically.
\end{proof}
Because every parabolic subalgebra of $\mathfrak{g}$ is conjugate by an element in $G$ to a standard parabolic subalgebra (Lemma 3.8.1(iii) [\textbf{CM}]), for each parabolic subalgebra $\mathfrak{p}$, there exists $\Pi\subseteq\Delta$ and $g\in G$ such that $\mathfrak{p}=\mathrm{Ad}(g)\mathfrak{p}_\Pi$.
\begin{proposition}
Suppose that $\mathfrak{p}$ is a parabolic subalgebra such that $\mathfrak{p}=\mathrm{Ad}(g)\mathfrak{p}_\Pi$ for some $\Pi\subseteq\Delta$ and $g\in G$. Then
\begin{center}
$\mathcal{H}(\mathfrak{p})=\{\displaystyle{\sum_{\alpha\in\Delta-\Pi}}k_\alpha\mathrm{Ad}(gu)T_\alpha\mid u\in U_+,k_\alpha\in\mathbb{R}^+\}$
\end{center}
where $U_+$ is the unipotent radical of $P_\Pi$.
\end{proposition}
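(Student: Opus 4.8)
The plan is to obtain the statement as an immediate consequence of Lemma 3 and Corollary 1, essentially by chaining two previously established facts together. First I would apply Lemma 3 in the special case where the parabolic subalgebra is the standard one $\mathfrak{p}_\Pi$ and the group element is the given $g\in G$. Since by hypothesis $\mathfrak{p}=\mathrm{Ad}(g)\mathfrak{p}_\Pi$, Lemma 3 yields
\[
\mathcal{H}(\mathfrak{p})=\mathcal{H}(\mathrm{Ad}(g)\mathfrak{p}_\Pi)=\mathrm{Ad}(g)\,\mathcal{H}(\mathfrak{p}_\Pi).
\]
This reduces the problem of describing $\mathcal{H}(\mathfrak{p})$ for the general conjugate $\mathfrak{p}$ to the already-solved problem of describing $\mathcal{H}(\mathfrak{p}_\Pi)$ for the standard parabolic subalgebra.

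Next I would substitute the explicit description of $\mathcal{H}(\mathfrak{p}_\Pi)$ furnished by Corollary 1, namely $\mathcal{H}(\mathfrak{p}_\Pi)=\{\sum_{\alpha\in\Delta-\Pi}k_\alpha\mathrm{Ad}(u)T_\alpha\mid u\in U_+,\ k_\alpha\in\mathbb{R}^+\}$, where $U_+$ is the unipotent radical of $P_\Pi$. Applying $\mathrm{Ad}(g)$ to each element of this set, the final step is the purely formal manipulation of pushing $\mathrm{Ad}(g)$ through the expression: since $\mathrm{Ad}(g)$ is a linear endomorphism of $\mathfrak{g}$ it commutes with the finite sum and with the real scalars $k_\alpha$, and since $\mathrm{Ad}$ is a group homomorphism one has $\mathrm{Ad}(g)\mathrm{Ad}(u)=\mathrm{Ad}(gu)$. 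Consequently
\[
\mathrm{Ad}(g)\Big(\sum_{\alpha\in\Delta-\Pi}k_\alpha\mathrm{Ad}(u)T_\alpha\Big)=\sum_{\alpha\in\Delta-\Pi}k_\alpha\mathrm{Ad}(gu)T_\alpha,
\]
which is exactly the claimed description.

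I do not expect any genuine obstacle in this argument; it is a direct corollary of the two cited results. The only points requiring minor care are bookkeeping rather than substance: one should note that $U_+$ in the final formula continues to denote the unipotent radical of the standard parabolic $P_\Pi$ (not of $P$ itself), and that the element $gu$ ranges over the appropriate product as $u$ ranges over $U_+$, so that no elements are lost or spuriously introduced when $\mathrm{Ad}(g)$ is distributed. With these conventions fixed, the equality of the two sets is immediate and the proof is complete.
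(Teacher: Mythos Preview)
Your proposal is correct and matches the paper's own proof, which simply states that the conclusion follows immediately from Lemma 3 and Corollary 1. Your write-up merely spells out the elementary bookkeeping (linearity of $\mathrm{Ad}(g)$ and the homomorphism identity $\mathrm{Ad}(g)\mathrm{Ad}(u)=\mathrm{Ad}(gu)$) that the paper leaves implicit.
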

\begin{proof}
By Lemma 3 and Corollary 1, the conclusion follows immediately.
\end{proof}

\section{Compatible Parabolic Subalgebra}

Let $\mathfrak{g}'$ be a reductive subalgebra of $\mathfrak{g}$, and $\mathfrak{p}$ a parabolic subalgebra of $\mathfrak{g}$.
\begin{definition}
We say $\mathfrak{p}$ is $\mathfrak{g}'$-$\emph{compatible}$ if there exists a hyperbolic element $H$ of $\mathfrak{g}'$ such that $\mathfrak{p}=\mathfrak{p}(H)$.
\end{definition}
If $\mathfrak{p}=\mathfrak{l}+\mathfrak{u}_+$ is $\mathfrak{g}'$-compatible, then $\mathfrak{p}':=\mathfrak{p}\cap\mathfrak{g}'$ becomes a parabolic subalgebra of $\mathfrak{g}'$ with Levi decomposition
\begin{center}
$\mathfrak{p}'=\mathfrak{l}'+\mathfrak{u}'_+:=(\mathfrak{l}\cap\mathfrak{g}')+(\mathfrak{u}_+\cap\mathfrak{g}')$.
\end{center}
According to the definition, $\mathfrak{p}$ is $\mathfrak{g}'$-compatible if and only if $\mathcal{H}(\mathfrak{p})\cap\mathfrak{g}'\neq\phi$.
\begin{lemma}
Let $\mathfrak{g}$ be a complex semi-simple Lie algebra with a reductive subalgebra $\mathfrak{g}'$. Suppose that $\mathfrak{p}$ is a parabolic subalgebra of $\mathfrak{g}$. Take an arbitrary maximal toral subalgebra $\mathfrak{h}'$ of $\mathfrak{p}':=\mathfrak{p}\cap\mathfrak{g}'$. Then $\mathfrak{p}$ is $\mathfrak{g}'$-compatible if and only if there exists a hyperbolic element $H$ in $\mathfrak{h}'$ such that $\mathfrak{p}=\mathfrak{p}(H)$.
\end{lemma}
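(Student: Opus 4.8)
The plan is to prove the two implications separately, treating the converse as the substantial one. For the easy direction, suppose there is a hyperbolic element $H\in\mathfrak{h}'$ with $\mathfrak{p}=\mathfrak{p}(H)$. Since $\mathfrak{h}'\subseteq\mathfrak{p}'=\mathfrak{p}\cap\mathfrak{g}'\subseteq\mathfrak{g}'$, this $H$ is a hyperbolic element lying in $\mathfrak{g}'$ that defines $\mathfrak{p}$, so $\mathfrak{p}$ is $\mathfrak{g}'$-compatible by Definition 2. (Restricting $\mathrm{ad}(H)$ to $\mathfrak{g}'$ keeps it semisimple with a subset of the original real eigenvalues, so hyperbolicity relative to $\mathfrak{g}'$ is not lost.) This direction requires no further work.

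For the forward direction, assume $\mathfrak{p}$ is $\mathfrak{g}'$-compatible. As noted after Definition 2 this is equivalent to $\mathcal{H}(\mathfrak{p})\cap\mathfrak{g}'\neq\emptyset$, so I would fix some $H_0\in\mathcal{H}(\mathfrak{p})\cap\mathfrak{g}'$. Because $\mathcal{H}(\mathfrak{p})\subseteq\mathfrak{p}$, we get $H_0\in\mathfrak{p}\cap\mathfrak{g}'=\mathfrak{p}'$, and $H_0$ is in particular semisimple. Moreover, by the same remark $\mathfrak{p}'$ is genuinely a parabolic subalgebra of $\mathfrak{g}'$; let $P'$ denote the corresponding parabolic subgroup of $G'$, the connected subgroup of $G$ with Lie algebra $\mathrm{ad}(\mathfrak{g}')$, with Levi decomposition $P'=L'U'_+$ matching $\mathfrak{p}'=\mathfrak{l}'+\mathfrak{u}'_+$. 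The goal is now to move $H_0$ into the prescribed maximal toral subalgebra $\mathfrak{h}'$ by an element of $P'$, not merely of $G'$.

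The key step is the standard conjugacy statement for parabolic subalgebras: any two maximal toral subalgebras of $\mathfrak{p}'$ are conjugate under $P'$ (one conjugates a maximal torus into the Levi factor $\mathfrak{l}'$ using the unipotent radical $U'_+$, and Cartan subalgebras of the reductive algebra $\mathfrak{l}'$ are conjugate under $L'$). Since the semisimple element $H_0$ lies in some maximal toral subalgebra of $\mathfrak{p}'$, this produces $g'\in P'$ with $\mathrm{Ad}(g')H_0\in\mathfrak{h}'$. I would then set $H:=\mathrm{Ad}(g')H_0$; as $\mathrm{Ad}(g')$ is an inner automorphism of $\mathfrak{g}$ it preserves the reality of $\mathrm{ad}$-eigenvalues, so $H$ is again hyperbolic, and it lies in $\mathfrak{h}'$ by construction.

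Finally I would verify $\mathfrak{p}(H)=\mathfrak{p}$. The decisive observation is that $g'\in P'$ automatically normalizes $\mathfrak{p}$: the connected group $P'=L'U'_+$ is generated by elements $\exp(\mathrm{ad}\,X)$ with $X\in\mathfrak{p}'\subseteq\mathfrak{p}$, and since $\mathfrak{p}$ is a subalgebra each such exponential preserves $\mathfrak{p}$, whence $\mathrm{Ad}(g')\mathfrak{p}=\mathfrak{p}$. Invoking Lemma 1 in the sharpened form $\mathrm{Ad}(g)\mathfrak{p}(H_0)=\mathfrak{p}(\mathrm{Ad}(g)H_0)$ established in its proof then gives $\mathfrak{p}(H)=\mathfrak{p}(\mathrm{Ad}(g')H_0)=\mathrm{Ad}(g')\mathfrak{p}=\mathfrak{p}$, which finishes the argument. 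I expect the main obstacle to be exactly this coordination: an arbitrary element of $G'$ carrying $H_0$ into $\mathfrak{h}'$ need not fix $\mathfrak{p}$, so one must arrange the conjugating element to lie in $P'$; the containment $\mathfrak{p}'\subseteq\mathfrak{p}$ together with the conjugacy of maximal toral subalgebras \emph{within} $\mathfrak{p}'$ is precisely what reconciles the two requirements of landing in $\mathfrak{h}'$ and preserving $\mathfrak{p}$.
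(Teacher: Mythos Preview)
Your proof is correct and follows essentially the same route as the paper: pick a hyperbolic $H_0\in\mathfrak{g}'$ defining $\mathfrak{p}$, note it lies in $\mathfrak{p}'$, conjugate it into the chosen maximal toral subalgebra $\mathfrak{h}'$ by an inner automorphism of $\mathfrak{p}'$, and use that such an automorphism fixes $\mathfrak{p}$. The only cosmetic differences are that the paper phrases the conjugacy as taking place in $\mathrm{Int}(\mathfrak{p}')$ rather than in a parabolic subgroup $P'\subseteq G'$, and it cites Lemma~3 instead of Lemma~1 for the final identification $\mathfrak{p}(\mathrm{Ad}(x)H_0)=\mathrm{Ad}(x)\mathfrak{p}=\mathfrak{p}$.
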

\begin{proof}
The ``if'' part follows the definition immediately. Now if $\mathfrak{p}$ is a $\mathfrak{g}'$-compatible parabolic subalgebra, there is a hyperbolic element $H'\in\mathfrak{g}'$ such that $\mathfrak{p}=\mathfrak{p}(H')$. Moreover, $H'$ is a semi-simple element in $\mathfrak{p}$, so in $\mathfrak{p}'$, and hence there exists a maximal toral subalgebra $\mathfrak{t}'$ of $\mathfrak{p}'$ such that $H'\in\mathfrak{t}'$. Then $\mathfrak{h}'$ is conjugate to $\mathfrak{t}'$ by an element $x\in\mathrm{Int}(\mathfrak{p}')$, so $\mathrm{Ad}(x)H'\in\mathfrak{h}'$ which is also a hyperbolic element. Therefore, $\mathfrak{p}=\mathrm{Ad}(x)\mathfrak{p}=\mathfrak{p}(\mathrm{Ad}(x)H')$ by Lemma 3. Let $H=\mathrm{Ad}(x)H'$, and then the ``only if'' part is proved.
\end{proof}
\begin{remark}
Remember that ``reductive subalgebra'' means that each semi-simple element in $\mathfrak{g}'$ is also semi-simple in $\mathfrak{g}$; in other words, each Cartan subalgebra $\mathfrak{h}'$ of $\mathfrak{g}'$ is contained in some Cartan subalgebra $\mathfrak{h}$ of $\mathfrak{g}$. Thus $\mathfrak{h}\cap\mathfrak{g}'=\mathfrak{h}'$. If $\mathfrak{p}_\Pi$ is a parabolic subalgebra of $\mathfrak{g}$ containing $\mathfrak{h}$, then Lemma 4 tells us that $\mathfrak{p}_\Pi$ is $\mathfrak{g}'$-compatible if and only if $\mathcal{H}(\mathfrak{h},\Delta,\mathfrak{p}_\Pi)\cap\mathfrak{g}'\neq\phi$.
\end{remark}

\section{Classification for Complex Symmetric Pairs}

Let $\tau$ be an involutive automorphism of the complex semi-simple Lie algebra $\mathfrak{g}$ and $\mathfrak{g}^\tau$ be the set of fix points of $\tau$. Then $(\mathfrak{g},\mathfrak{g}^\tau)$ forms a complex symmetric pair. Let $\mathfrak{b}$ be a $\tau$-stable Borel subalgebra of $\mathfrak{g}$. We shall determine the $\mathfrak{g}'$-compatibility for all the parabolic subalgebras which contain $\mathfrak{b}$.
\begin{lemma}(Lemma 4.5 [$\mathbf{Ko}$])
A parabolic subalgebra is $\tau$-stable if and only if it is $\mathfrak{g}^\tau$-compatible.
\end{lemma}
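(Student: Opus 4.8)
The plan is to prove the two implications separately; the easy direction is purely eigenspace bookkeeping, while the real content lies in ``$\tau$-stable $\Rightarrow$ $\mathfrak{g}^\tau$-compatible.'' For the easy direction, suppose $\mathfrak{p}$ is $\mathfrak{g}^\tau$-compatible, so $\mathfrak{p}=\mathfrak{p}(H)$ for some hyperbolic $H\in\mathfrak{g}^\tau$. Since $\tau H=H$ and $\tau$ is an automorphism, $\mathrm{ad}(H)=\tau\circ\mathrm{ad}(H)\circ\tau^{-1}$, so $\tau$ commutes with $\mathrm{ad}(H)$ and hence preserves every eigenspace of $\mathrm{ad}(H)$. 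Consequently $\tau$ fixes the sum of the non-negative eigenspaces, which is exactly $\mathfrak{p}(H)=\mathfrak{p}$, so $\mathfrak{p}$ is $\tau$-stable.

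For the converse, I would first record that $\tau$ preserves $\mathcal{H}(\mathfrak{p})$. If $H$ is hyperbolic then so is $\tau H$, since conjugating $\mathrm{ad}(H)$ by $\tau$ preserves both semi-simplicity and the real spectrum; and the eigenspace description gives $\mathfrak{u}_+(\tau H)=\tau\mathfrak{u}_+(H)$, $\mathfrak{l}(\tau H)=\tau\mathfrak{l}(H)$, whence $\mathfrak{p}(\tau H)=\tau\mathfrak{p}(H)=\tau\mathfrak{p}=\mathfrak{p}$. Thus $H\in\mathcal{H}(\mathfrak{p})$ implies $\tau H\in\mathcal{H}(\mathfrak{p})$. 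Since $\mathcal{H}(\mathfrak{p})\neq\emptyset$ (Proposition 1 together with Lemma 3, or directly Proposition 3, exhibits an explicit element), I may fix $H_0\in\mathcal{H}(\mathfrak{p})$, and then $\tau H_0\in\mathcal{H}(\mathfrak{p})$ as well.

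The key step is to show that $\mathcal{H}(\mathfrak{p})$ is a convex subset of $\mathfrak{g}$. By Lemma 3 it suffices to treat $\mathfrak{p}=\mathfrak{p}_\Pi$, since each $\mathrm{Ad}(g)$ is $\mathbb{R}$-linear and therefore carries convex sets to convex sets. For the standard parabolic I would sharpen Corollary 1 to the explicit description
\[
\mathcal{H}(\mathfrak{p}_\Pi)=\Bigl\{\sum_{\alpha\in\Delta-\Pi}k_\alpha T_\alpha+X\ \Big|\ k_\alpha\in\mathbb{R}^+,\ X\in\mathfrak{u}_+\Bigr\}.
\]
Both inclusions follow from Proposition 2 once one knows the orbit identity $\mathrm{Ad}(U_+)H=H+\mathfrak{u}_+$ for each $H\in\mathcal{H}(\mathfrak{h},\Delta,\mathfrak{p}_\Pi)$: the orbit map $u\mapsto\mathrm{Ad}(u)H$ takes values in $H+\mathfrak{u}_+$ and has trivial stabilizer because $\mathrm{ad}(H)$ is invertible on $\mathfrak{u}_+$, so the $U_+$-orbit is a closed subvariety of full dimension in the affine space $H+\mathfrak{u}_+$ and hence fills it. The right-hand side above is visibly convex, since a real convex combination keeps each coefficient $k_\alpha$ positive and keeps the $\mathfrak{u}_+$-component inside the vector space $\mathfrak{u}_+$.

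With convexity established, the midpoint $\tfrac{1}{2}(H_0+\tau H_0)$ is a convex combination of two points of $\mathcal{H}(\mathfrak{p})$, hence lies in $\mathcal{H}(\mathfrak{p})$; and it is fixed by $\tau$ because $\tau^2=\mathrm{id}$. It is therefore a hyperbolic element of $\mathfrak{g}^\tau$ defining $\mathfrak{p}$, so $\mathcal{H}(\mathfrak{p})\cap\mathfrak{g}^\tau\neq\emptyset$ and $\mathfrak{p}$ is $\mathfrak{g}^\tau$-compatible. I expect the convexity step to be the main obstacle: everything reduces to the orbit identity $\mathrm{Ad}(U_+)H=H+\mathfrak{u}_+$, which is where one must invoke that orbits of a unipotent group on an affine variety are closed (Kostant--Rosenlicht). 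Once $\mathcal{H}(\mathfrak{p}_\Pi)$ is recognized as an affine slice of a convex cone, the averaging argument is immediate and the $\tau$-fixed hyperbolic element drops out.
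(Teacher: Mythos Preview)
The paper itself does not prove this lemma; it is quoted from Kobayashi [\textbf{Ko}] and used as a black box, so there is no internal argument to compare yours against.

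Your proof is correct. The easy direction is immediate, and for the hard direction your scheme---pick any $H_0\in\mathcal{H}(\mathfrak p)$, observe that $\tau H_0\in\mathcal{H}(\mathfrak p)$ because $\tau\mathfrak p=\mathfrak p$, then average---is the standard idea. The only substantive step is the convexity of $\mathcal{H}(\mathfrak p)$, and your identification
\[
\mathcal{H}(\mathfrak p_\Pi)=\mathcal{H}(\mathfrak h,\Delta,\mathfrak p_\Pi)+\mathfrak u_+
\]
via the orbit equality $\mathrm{Ad}(U_+)H=H+\mathfrak u_+$ does the job cleanly. The appeal to Kostant--Rosenlicht is legitimate but heavier than needed: the map $\mathfrak u_+\to\mathfrak u_+$, $X\mapsto e^{\mathrm{ad}X}H-H$, has invertible linear part $-\mathrm{ad}(H)|_{\mathfrak u_+}$ while all higher-order terms strictly raise the positive $\mathrm{ad}(H)$-grading, so an elementary filtered inversion already yields the bijection.

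For comparison, the argument as usually packaged (and essentially the one in [\textbf{Ko}]) avoids describing all of $\mathcal{H}(\mathfrak p)$: since $\tau$ fixes $\mathfrak p$ it fixes the nilradical $\mathfrak u_+$, and because Levi complements form a single $U_+$-torsor one produces a $\tau$-stable Levi (a short $H^1(\mathbb{Z}/2,U_+)=0$ argument over $\mathbb{C}$), then a $\tau$-stable Cartan $\mathfrak h$ inside it; now $H_0$ and $\tau H_0$ both lie in the visibly convex open cone $\mathcal{H}(\mathfrak h,\Delta,\mathfrak p_\Pi)$ of Proposition~1, and the midpoint is immediate. Your route trades the construction of a $\tau$-stable Levi for the global convexity of $\mathcal{H}(\mathfrak p)$; both ultimately rest on the same $U_+$-transitivity, just organized differently.
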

\begin{proposition}
Let $(\mathfrak{g},\mathfrak{g}^\tau)$ be a complex symmetric pair with involution $\tau$, and let $\mathfrak{p}$ be a parabolic subalgebra of $\mathfrak{g}$ containing a $\tau$-stable Borel subalgebra $\mathfrak{b}$. Suppose that there is another $\tau$-stable Borel subalgebra $\mathfrak{b}'=\mathrm{Ad}(g)\mathfrak{b}$ for $g\in G=\mathrm{Int}(\mathfrak{g})$. If $\mathfrak{p}$ is $\mathfrak{g}^\tau$-compatible, so is $\mathrm{Ad}(g)\mathfrak{p}$.
\end{proposition}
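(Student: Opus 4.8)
The plan is to convert $\mathfrak{g}^\tau$-compatibility into $\tau$-stability by means of Lemma 5, and then to verify the required $\tau$-stability of $\mathrm{Ad}(g)\mathfrak{p}$ through a single equivariance computation. Since $\mathfrak{p}$ is assumed $\mathfrak{g}^\tau$-compatible, Lemma 5 gives $\tau\mathfrak{p}=\mathfrak{p}$, and the same lemma reduces the whole statement to establishing that $\tau(\mathrm{Ad}(g)\mathfrak{p})=\mathrm{Ad}(g)\mathfrak{p}$.

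The first ingredient I would record is the compatibility of $\tau$ with the adjoint action. Because $\tau$ is an automorphism of $\mathfrak{g}$, it normalizes $G=\mathrm{Int}(\mathfrak{g})$, so for each $g\in G$ the element $g^\tau:=\tau g\tau^{-1}$ again lies in $G$, and as operators on $\mathfrak{g}$ one has $\tau\circ\mathrm{Ad}(g)=\mathrm{Ad}(g^\tau)\circ\tau$. Consequently, for any subalgebra $\mathfrak{a}$ of $\mathfrak{g}$ we get $\tau(\mathrm{Ad}(g)\mathfrak{a})=\mathrm{Ad}(g^\tau)(\tau\mathfrak{a})$, which is the identity driving the rest of the argument.

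Next I would exploit the hypothesis that both $\mathfrak{b}$ and $\mathfrak{b}'=\mathrm{Ad}(g)\mathfrak{b}$ are $\tau$-stable. Applying the equivariance identity to $\mathfrak{b}$ and using $\tau\mathfrak{b}=\mathfrak{b}$ yields $\mathrm{Ad}(g^\tau)\mathfrak{b}=\tau(\mathrm{Ad}(g)\mathfrak{b})=\mathrm{Ad}(g)\mathfrak{b}$, hence $\mathrm{Ad}(g^{-1}g^\tau)\mathfrak{b}=\mathfrak{b}$. Thus $g^{-1}g^\tau$ normalizes $\mathfrak{b}$, i.e. $g^{-1}g^\tau\in B=N_G(\mathfrak{b})$. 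Since $\mathfrak{b}\subseteq\mathfrak{p}$ forces $B\subseteq P=N_G(\mathfrak{p})$, I conclude $g^{-1}g^\tau\in P$, so that $\mathrm{Ad}(g^{-1}g^\tau)\mathfrak{p}=\mathfrak{p}$. Feeding this back, $\tau(\mathrm{Ad}(g)\mathfrak{p})=\mathrm{Ad}(g^\tau)(\tau\mathfrak{p})=\mathrm{Ad}(g^\tau)\mathfrak{p}=\mathrm{Ad}(g)\,\mathrm{Ad}(g^{-1}g^\tau)\mathfrak{p}=\mathrm{Ad}(g)\mathfrak{p}$, so $\mathrm{Ad}(g)\mathfrak{p}$ is $\tau$-stable and therefore $\mathfrak{g}^\tau$-compatible by Lemma 5.

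The only genuinely delicate points are structural rather than computational. The first is the equivariance $\tau\circ\mathrm{Ad}(g)=\mathrm{Ad}(g^\tau)\circ\tau$, which I would justify on one-parameter subgroups via $\tau\exp(\mathrm{ad}X)\tau^{-1}=\exp(\mathrm{ad}(\tau X))$ and then extend to all of $G=\mathrm{Int}(\mathfrak{g})$. The second is the containment $N_G(\mathfrak{b})\subseteq N_G(\mathfrak{p})$ when $\mathfrak{b}\subseteq\mathfrak{p}$, which follows because the connected parabolic subgroups satisfy $B\subseteq P$ while each is its own Lie-algebra normalizer. Everything else is formal manipulation, so I expect the bulk of the care to lie in setting up these two facts about $\mathrm{Int}(\mathfrak{g})$ cleanly.
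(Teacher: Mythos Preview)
Your argument is correct and essentially identical to the paper's proof: both reduce to $\tau$-stability via Lemma~5, use the equivariance $\tau\circ\mathrm{Ad}(g)=\mathrm{Ad}(\tau(g))\circ\tau$ together with the $\tau$-stability of $\mathfrak{b}$ and $\mathrm{Ad}(g)\mathfrak{b}$ to deduce $g^{-1}\tau(g)\in N_G(\mathfrak{b})=B$, and then conclude $\tau(\mathrm{Ad}(g)\mathfrak{p})=\mathrm{Ad}(g)\mathfrak{p}$ because $B$ normalizes any parabolic containing $\mathfrak{b}$. Your only cosmetic addition is the explicit discussion of the two ``delicate points'' (the equivariance identity and $N_G(\mathfrak{b})\subseteq N_G(\mathfrak{p})$), which the paper invokes without further comment.
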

\begin{proof}
Without confusion, let's denote $\tau$ to be the global involution of $G$ as well. Because both $\mathfrak{b}$ and $\mathrm{Ad}(g)\mathfrak{b}$ is $\tau$-stable, $\mathrm{Ad}(g^{-1}\tau(g))\mathfrak{b}=\mathrm{Ad}(g^{-1})\mathrm{Ad}(\tau(g))\tau\mathfrak{b}=\mathrm{Ad}(g^{-1})\tau(\mathrm{Ad}(g)\mathfrak{b})= \mathrm{Ad}(g^{-1})\mathrm{Ad}(g)\mathfrak{b}=\mathfrak{b}$. Hence, $g^{-1}\tau(g)\in N_G(\mathfrak{b})=N_G(B)=B$ where $B$ is the Borel subgroup of $G$ with Lie algebra $\mathfrak{b}$ by Proposition 6.4.9 [\textbf{Sp}]. Since $\mathfrak{p}$ is $\mathfrak{g}^\tau$-compatible, by Lemma 5, $\tau(\mathfrak{p})=\mathfrak{p}$, and it follows that $\tau(\mathrm{Ad}(g)\mathfrak{p})=\mathrm{Ad}(\tau(g))\tau(\mathfrak{p})=\mathrm{Ad}(g)\mathrm{Ad}(g^{-1}\tau(g))\mathfrak{p}=\mathrm{Ad}(g)\mathfrak{p}$ because $\mathfrak{p}$ contains $\mathfrak{b}$ and $g^{-1}\tau(g)\in B$ normalizes $\mathfrak{p}$. Finally by Lemma 5 again, $\mathrm{Ad}(g)\mathfrak{p}$ is $\mathfrak{g}^\tau$-compatible.
\end{proof}
\begin{remark}
By Lemma 5, choosing a $\tau$-stable Borel subalgebra is equivalent to choosing a $\mathfrak{g}^\tau$-compatible Borel subalgebra. Proposition 4 indicates that we are able to choose an arbitrary $\tau$-stable Borel subalgebra.
\end{remark}
As mentioned in Section 1, two types of complex symmetric pairs are worth studying:
\begin{enumerate}[type 1]
\item $(\mathfrak{g}\oplus\mathfrak{g},\mathfrak{g})$, where $\mathfrak{g}$ is a complex simple Lie algebra and is diagonally embedded into $\mathfrak{g}\oplus\mathfrak{g}$;
\item $(\mathfrak{g},\mathfrak{g}^\tau)$, where $\mathfrak{g}$ is a complex simple Lie algebra with involution $\tau$.
\end{enumerate}
As to type 1, the following theorem solves the problem completely.
\begin{proposition}
Let $(\mathfrak{g}\oplus\mathfrak{g},\mathfrak{g})$ be a complex symmetric pair, where $\mathfrak{g}$ is a complex semi-simple Lie algebra and is diagonally embedded into $\mathfrak{g}\oplus\mathfrak{g}$. Then the $\mathfrak{g}$-compatible parabolic subalgebras of $\mathfrak{g}\oplus\mathfrak{g}$ are exactly $\mathfrak{p}\oplus\mathfrak{p}$ for $\mathfrak{p}$ parabolic subalgebra of $\mathfrak{g}$.
\end{proposition}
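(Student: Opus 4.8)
The plan is to reduce everything to the single observation that the adjoint action of a ``split'' element respects the direct-sum decomposition of $\mathfrak{g}\oplus\mathfrak{g}$, and then to read off both inclusions straight from the definition of $\mathfrak{g}$-compatibility via hyperbolic elements. First I would record the structure of an arbitrary parabolic subalgebra of the direct sum. Given a hyperbolic element $H\in\mathfrak{g}\oplus\mathfrak{g}$, write $H=(H_1,H_2)$ with $H_i\in\mathfrak{g}$. Since $\mathfrak{g}\oplus\mathfrak{g}$ is the sum of two ideals, $\mathrm{ad}(H)$ preserves each factor and acts there as $\mathrm{ad}(H_1)$ and $\mathrm{ad}(H_2)$ respectively, so the $\lambda$-eigenspace of $\mathrm{ad}(H)$ is the direct sum of the $\lambda$-eigenspaces of $\mathrm{ad}(H_1)$ and $\mathrm{ad}(H_2)$. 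In particular $H$ is hyperbolic if and only if both $H_1$ and $H_2$ are, and collecting the positive, zero, and negative eigenspaces yields
\[\mathfrak{u}_+(H)=\mathfrak{u}_+(H_1)\oplus\mathfrak{u}_+(H_2),\qquad \mathfrak{l}(H)=\mathfrak{l}(H_1)\oplus\mathfrak{l}(H_2),\]
hence $\mathfrak{p}(H)=\mathfrak{p}(H_1)\oplus\mathfrak{p}(H_2)$. This already shows that every parabolic subalgebra of $\mathfrak{g}\oplus\mathfrak{g}$ is of the form $\mathfrak{p}_1\oplus\mathfrak{p}_2$ with each $\mathfrak{p}_i$ parabolic in $\mathfrak{g}$.

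Next I would apply Definition 2 directly, with the reductive subalgebra being the diagonal copy of $\mathfrak{g}$. A hyperbolic element of the diagonal is exactly an element $(H_0,H_0)$ with $H_0$ hyperbolic in $\mathfrak{g}$: indeed $(H_0,H_0)$ is semi-simple in $\mathfrak{g}\oplus\mathfrak{g}$ precisely when $H_0$ is semi-simple in $\mathfrak{g}$, and by the decomposition above its $\mathrm{ad}$-eigenvalues are real precisely when those of $H_0$ are. By the first step, $\mathfrak{p}((H_0,H_0))=\mathfrak{p}(H_0)\oplus\mathfrak{p}(H_0)$. Therefore a $\mathfrak{g}$-compatible parabolic subalgebra, being $\mathfrak{p}(H)$ for some hyperbolic $H$ in the diagonal, is automatically of the form $\mathfrak{p}\oplus\mathfrak{p}$. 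Conversely, given any parabolic $\mathfrak{p}$ of $\mathfrak{g}$, Proposition 1 furnishes a hyperbolic $H_0\in\mathfrak{g}$ with $\mathfrak{p}=\mathfrak{p}(H_0)$; then $(H_0,H_0)$ is a hyperbolic element of the diagonal defining $\mathfrak{p}\oplus\mathfrak{p}$, so $\mathfrak{p}\oplus\mathfrak{p}$ is $\mathfrak{g}$-compatible. This settles both inclusions.

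As a cross-check I would note the alternative route through Lemma 5: the diagonal is $\mathfrak{g}^\tau$ for the swap involution $\tau(X,Y)=(Y,X)$, so $\mathfrak{g}$-compatibility is equivalent to $\tau$-stability. Writing a parabolic as $\mathfrak{p}_1\oplus\mathfrak{p}_2$ (first step) and using $\tau(\mathfrak{p}_1\oplus\mathfrak{p}_2)=\mathfrak{p}_2\oplus\mathfrak{p}_1$, stability forces $\mathfrak{p}_1=\mathfrak{p}_2$, recovering the same answer. Honestly there is no serious obstacle here; the only point demanding care is the first step, namely justifying cleanly that $\mathrm{ad}$ of a split element respects the factor decomposition and that the hyperbolic elements of the diagonal are exactly the ``doubled'' hyperbolic elements $(H_0,H_0)$. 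Once that is in place, both directions of the classification fall out immediately from the definition of compatibility.
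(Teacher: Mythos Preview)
Your argument is correct. Your primary route differs from the paper's: the paper invokes Lemma~5 (Kobayashi's criterion that $\mathfrak{g}^\tau$-compatibility is equivalent to $\tau$-stability) and then reads off the condition $\mathfrak{p}_1=\mathfrak{p}_2$ from the swap involution, whereas you work directly from Definition~2, decomposing $\mathrm{ad}(H_1,H_2)$ along the two ideals to identify both the parabolics of $\mathfrak{g}\oplus\mathfrak{g}$ and those arising from diagonal hyperbolic elements. Your direct approach is more self-contained---it does not rely on the cited, non-trivial Lemma~5---while the paper's is shorter because that lemma does all the work. In fact your ``cross-check'' paragraph \emph{is} the paper's proof. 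One minor citation point: when you say Proposition~1 furnishes a hyperbolic $H_0$ with $\mathfrak{p}=\mathfrak{p}(H_0)$ for an arbitrary parabolic $\mathfrak{p}$ of $\mathfrak{g}$, note that Proposition~1 treats only standard parabolics; you need conjugacy to a standard one (as recorded before Proposition~3) to cover the general case.
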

\begin{proof}
Let $\tau$ denote the corresponding involution, and then $\tau:\mathfrak{g}\oplus\mathfrak{g}\rightarrow\mathfrak{g}\oplus\mathfrak{g}$ is given by $(X,Y)\mapsto(Y,X)$. It is known that the parabolic subalgebras of $\mathfrak{g}\oplus\mathfrak{g}$ are exactly $\mathfrak{p}_1\oplus\mathfrak{p}_2$ for $\mathfrak{p}_1$ and $\mathfrak{p}_2$ parabolic subalgebras of $\mathfrak{g}$. By Lemma 5, $\mathfrak{p}_1\oplus\mathfrak{p}_2$ is $\mathfrak{g}$-compatible if and only if $\tau(\mathfrak{p}_1\oplus\mathfrak{p}_2)=\mathfrak{p}_1\oplus\mathfrak{p}_2$, which is equivalent to $\mathfrak{p}_1=\mathfrak{p}_2$ according to the definition of $\tau$.
\end{proof}
From now on, only complex symmetric pairs of type 2 will be focused on. The following two lemmas are almost obvious.
\begin{lemma}
Let $\mathfrak{g}$ be a complex semi-simple Lie algebra and $\mathfrak{g}'$ be its reductive subalgebra. Then $\mathfrak{g}$ is $\mathfrak{g}'$-compatible.
\end{lemma}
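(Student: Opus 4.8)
The plan is to unwind the definition of $\mathfrak{g}'$-compatibility directly. Recall that $\mathfrak{g}$ is itself a parabolic subalgebra of $\mathfrak{g}$ (the improper one), so by Definition 3 the assertion ``$\mathfrak{g}$ is $\mathfrak{g}'$-compatible'' amounts to producing a single hyperbolic element $H\in\mathfrak{g}'$ with $\mathfrak{p}(H)=\mathfrak{g}$. Since $\mathfrak{p}(H)=\mathfrak{l}(H)+\mathfrak{u}_+(H)$ is recovered as all of $\mathfrak{g}$ exactly when the negative-eigenspace part $\mathfrak{u}_-(H)$ vanishes, I only need a hyperbolic $H$ in $\mathfrak{g}'$ for which $\mathrm{ad}(H)$ has no negative eigenvalue. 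The cleanest such witness is $H=0$.

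Concretely, I would take $H=0\in\mathfrak{g}'$. This is a semi-simple element, and $\mathrm{ad}(0)=0$ has the single eigenvalue $0$, which is real, so $0$ is a hyperbolic element of $\mathfrak{g}'$ in the sense of Definition 1. For this $H$ the positive and negative eigenspaces are trivial, $\mathfrak{u}_+(0)=\mathfrak{u}_-(0)=0$, while the zero-eigenspace is everything, $\mathfrak{l}(0)=\mathfrak{g}$; hence $\mathfrak{p}(0)=\mathfrak{l}(0)+\mathfrak{u}_+(0)=\mathfrak{g}$. Thus $0\in\mathfrak{g}'$ is a hyperbolic element defining $\mathfrak{g}$, which is precisely the statement that $\mathfrak{g}$ is $\mathfrak{g}'$-compatible.

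As the author anticipates, there is essentially no obstacle here: the argument is a matter of checking that $H=0$ satisfies the two required conditions. The only point meriting a word is that $0$ genuinely qualifies as a hyperbolic element of $\mathfrak{g}'$, but this holds vacuously since it is semi-simple and its sole $\mathrm{ad}$-eigenvalue is the real number $0$; it also lies in $\mathfrak{g}'$ trivially. One could equally use any element of the center of $\mathfrak{g}'$ acting with nonnegative eigenvalues, but the zero element makes the verification shortest.
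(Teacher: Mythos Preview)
Your proof is correct and follows exactly the same approach as the paper: both take $H=0\in\mathfrak{g}'$, observe that it is hyperbolic (semi-simple with only the real eigenvalue $0$), and note that $\mathfrak{p}(0)=\mathfrak{g}$. Your write-up simply spells out in more detail what the paper states in one line.
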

\begin{proof}
Notice that $H=0\in\mathfrak{g}'$ is a hyperbolic element defining $\mathfrak{g}$, and the conclusion follows.
\end{proof}
\begin{lemma}
Let $\mathfrak{g}$ be a complex semi-simple Lie algebra and $\mathfrak{g}'$ be its reductive subalgebra. If $\mathrm{Rank}_\mathbb{C}\mathfrak{g}=\mathrm{Rank}_\mathbb{C}\mathfrak{g}'$, then there is a Cartan subalgebra $\mathfrak{h}$ of $\mathfrak{g}$ such that each parabolic subalgebra $\mathfrak{p}$ of $\mathfrak{g}$ containing $\mathfrak{h}$ is $\mathfrak{g}'$-compatible.
\end{lemma}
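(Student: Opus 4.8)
The plan is to use the equal-rank hypothesis to locate a Cartan subalgebra of $\mathfrak{g}$ that already lies inside $\mathfrak{g}'$; over such a Cartan subalgebra every parabolic is defined by a hyperbolic element sitting in $\mathfrak{g}'$, and compatibility then becomes automatic.

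First I would fix any Cartan subalgebra $\mathfrak{h}'$ of $\mathfrak{g}'$ and apply the reductive hypothesis as recorded in Remark 1: $\mathfrak{h}'$ is contained in some Cartan subalgebra $\mathfrak{h}$ of $\mathfrak{g}$ with $\mathfrak{h}\cap\mathfrak{g}'=\mathfrak{h}'$. Since $\dim_\mathbb{C}\mathfrak{h}=\mathrm{Rank}_\mathbb{C}\mathfrak{g}=\mathrm{Rank}_\mathbb{C}\mathfrak{g}'=\dim_\mathbb{C}\mathfrak{h}'$ and $\mathfrak{h}'\subseteq\mathfrak{h}$, the two must coincide, so $\mathfrak{h}=\mathfrak{h}'\subseteq\mathfrak{g}'$. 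This $\mathfrak{h}$ is the Cartan subalgebra demanded by the statement: it is a Cartan subalgebra of $\mathfrak{g}$ that happens to be contained entirely in $\mathfrak{g}'$.

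Next I would take an arbitrary parabolic subalgebra $\mathfrak{p}$ of $\mathfrak{g}$ with $\mathfrak{h}\subseteq\mathfrak{p}$. Because $\mathfrak{h}$ is a Cartan subalgebra of $\mathfrak{g}$ lying in $\mathfrak{p}$, the parabolic $\mathfrak{p}$ contains a Borel subalgebra $\mathfrak{b}\supseteq\mathfrak{h}$, and $\mathfrak{b}$ singles out a positive system and a simple system $\Delta$ with respect to which $\mathfrak{p}=\mathfrak{p}_\Pi$ for some $\Pi\subseteq\Delta$. Proposition 1 then gives $\mathcal{H}(\mathfrak{h},\Delta,\mathfrak{p}_\Pi)=\{\sum_{\alpha\in\Delta-\Pi}k_\alpha T_\alpha\mid k_\alpha\in\mathbb{R}^+\}$, which is nonempty and visibly a subset of $\mathfrak{h}$. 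Since $\mathfrak{h}\subseteq\mathfrak{g}'$, each such element is a hyperbolic element of $\mathfrak{g}$ that lies in $\mathfrak{g}'$ and defines $\mathfrak{p}$, so $\mathcal{H}(\mathfrak{p})\cap\mathfrak{g}'\neq\emptyset$. By the compatibility criterion stated just before Lemma 4, $\mathfrak{p}$ is $\mathfrak{g}'$-compatible; as $\mathfrak{p}$ was arbitrary, the lemma follows.

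The only place the equal-rank hypothesis genuinely enters is the identification $\mathfrak{h}=\mathfrak{h}'$ in the first step; after that the argument is essentially formal. The one subtlety I would flag is that a parabolic containing $\mathfrak{h}$ need not be standard with respect to a single fixed simple system, so the simple system $\Delta$ in the second step has to be read off from a Borel subalgebra of $\mathfrak{p}$ itself and allowed to vary with $\mathfrak{p}$; Proposition 1 is then invoked afresh for each $\mathfrak{p}$ rather than for one preferred positive system. I do not expect any serious obstacle beyond this bookkeeping.
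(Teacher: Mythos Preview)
Your proof is correct and follows essentially the same approach as the paper: choose a Cartan subalgebra $\mathfrak{h}'$ of $\mathfrak{g}'$, extend it to a Cartan subalgebra $\mathfrak{h}$ of $\mathfrak{g}$, and use the equal-rank hypothesis to force $\mathfrak{h}=\mathfrak{h}'\subseteq\mathfrak{g}'$; then any parabolic containing $\mathfrak{h}$ is defined by a hyperbolic element of $\mathfrak{h}\subseteq\mathfrak{g}'$. The paper's proof simply asserts this last step, whereas you spell it out via Proposition~1 and correctly note that the simple system must be chosen relative to a Borel inside the given parabolic, so it may vary with $\mathfrak{p}$; this extra care is warranted but does not change the argument's substance.
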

\begin{proof}
Choose a Cartan subalgebra $\mathfrak{h}'$ of $\mathfrak{g}'$, and then it is contained a Cartan subalgebra $\mathfrak{h}$ of $\mathfrak{g}$. Because $\mathrm{Rank}_\mathbb{C}\mathfrak{g}=\mathrm{Rank}_\mathbb{C}\mathfrak{g}'$, $\mathfrak{h}'=\mathfrak{h}$. It follows that each parabolic parabolic subalgebra $\mathfrak{p}$ of $\mathfrak{g}$ containing $\mathfrak{h}$ has a hyperbolic element in $\mathfrak{h}'$ which defines it. Therefore, $\mathfrak{p}$ is $\mathfrak{g}'$-compatible.
\end{proof}
Table 1 lists all the complex symmetric pairs $(\mathfrak{g},\mathfrak{g}^\tau)$ with $\mathfrak{g}$ simple such that $\mathrm{Rank}_\mathbb{C}\mathfrak{g}=\mathrm{Rank}_\mathbb{C}\mathfrak{g}'$.
\begin{proposition}
For each complex symmetric pair listed in Table 1, there exists a Cartan subalgebra $\mathfrak{h}$ of $\mathfrak{g}$ such that all the parabolic subalgebras $\mathfrak{p}$ of $\mathfrak{g}$ containing $\mathfrak{h}$ are $\mathfrak{g}^\tau$-compatible.
\end{proposition}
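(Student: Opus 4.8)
The plan is to obtain this Proposition as an immediate consequence of Lemma 7, exploiting the fact that Table 1 is by construction precisely the list of complex symmetric pairs $(\mathfrak{g},\mathfrak{g}^\tau)$ with $\mathfrak{g}$ simple for which the equal-rank condition $\mathrm{Rank}_\mathbb{C}\mathfrak{g}=\mathrm{Rank}_\mathbb{C}\mathfrak{g}^\tau$ holds. Thus the entire task reduces to checking that the two hypotheses of Lemma 7 are met with the reductive subalgebra taken to be $\mathfrak{g}'=\mathfrak{g}^\tau$, and then quoting that lemma verbatim; no case-by-case analysis over the entries of the table is needed.

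First I would verify reductivity. The fixed-point set $\mathfrak{g}^\tau$ of the involution $\tau$ acting on the semisimple algebra $\mathfrak{g}$ is a reductive subalgebra in the sense of the Remark following Lemma 4, i.e.\ every semisimple element of $\mathfrak{g}^\tau$ remains semisimple in $\mathfrak{g}$; this is standard for fixed-point subalgebras of finite-order automorphisms of semisimple Lie algebras. Consequently a Cartan subalgebra $\mathfrak{h}'$ of $\mathfrak{g}^\tau$ consists of elements semisimple in $\mathfrak{g}$ and extends to a Cartan subalgebra $\mathfrak{h}$ of $\mathfrak{g}$ with $\mathfrak{h}\cap\mathfrak{g}^\tau=\mathfrak{h}'$. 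Next I would invoke the defining property of Table 1, namely $\mathrm{Rank}_\mathbb{C}\mathfrak{g}=\mathrm{Rank}_\mathbb{C}\mathfrak{g}^\tau$, which forces $\mathfrak{h}'=\mathfrak{h}$. With both hypotheses in hand, Lemma 7 produces exactly this $\mathfrak{h}$ as a Cartan subalgebra of $\mathfrak{g}$ such that any parabolic $\mathfrak{p}$ containing $\mathfrak{h}$ admits (by Proposition 1, applied to whatever positive system makes $\mathfrak{p}$ standard) a hyperbolic element lying in $\mathfrak{h}=\mathfrak{h}'\subseteq\mathfrak{g}^\tau$ which defines it; hence $\mathfrak{p}$ is $\mathfrak{g}^\tau$-compatible. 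This yields the conclusion uniformly for every pair in the table.

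The hard part will not be in this deduction, which is essentially a one-line appeal to Lemma 7, but rather in the correctness and completeness of Table 1 itself: determining exactly which involutions $\tau$ of a complex simple $\mathfrak{g}$ satisfy $\mathrm{Rank}_\mathbb{C}\mathfrak{g}^\tau=\mathrm{Rank}_\mathbb{C}\mathfrak{g}$. Since an involution is equal-rank precisely when it is inner --- an inner involution can be realized as $\mathrm{Ad}(\exp(2\pi i H))$ for some $H$ in a Cartan subalgebra $\mathfrak{h}$, so that $\mathfrak{h}\subseteq\mathfrak{g}^\tau$, whereas an outer involution strictly lowers the rank of the fixed subalgebra --- this amounts to enumerating the inner involutions and their fixed subalgebras, which is classical. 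Because the Proposition quantifies only over the pairs already displayed in Table 1, I would treat that classification as given input and not reprove it; the main obstacle, if there is one, resides entirely in assembling the table.
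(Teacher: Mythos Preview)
Your proposal is correct and takes essentially the same approach as the paper: the paper's proof is a one-line appeal to Lemma 7 using the defining equal-rank property of Table 1, which is exactly what you do (with some additional commentary on reductivity and on the provenance of the table that the paper leaves implicit).
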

\begin{proof}
Because each complex symmetric pair listed in Table 1 has equal complex ranks, the conclusion follows Lemma 7.
\end{proof}
\begin{table}
\begin{center}
\caption{complex symmetric pairs $(\mathfrak{g},\mathfrak{g}^\tau)$ with $\mathfrak{g}$ simple and equal ranks}
\begin{tabular}{|c|c|c|}
\hline
$\mathfrak{g}$&$\mathfrak{g}^\tau$&$\mathrm{Rank}_\mathbb{C}\mathfrak{g}=\mathrm{Rank}_\mathbb{C}\mathfrak{g}^\tau$\\
\hline
$sl(m+n,\mathbb{C})$&$s(gl(m,\mathbb{C})\oplus gl(n,\mathbb{C}))$&$m+n$\\
\hline
$so(2m+n,\mathbb{C})$&$so(2m,\mathbb{C})\oplus so(n,\mathbb{C})$&$m+[\frac{n}{2}]$\\
\hline
$so(2n,\mathbb{C})$&$gl(n,\mathbb{C})$&$n$\\
\hline
$sp(2m+2n,\mathbb{C})$&$sp(2m,\mathbb{C})\oplus sp(2n,\mathbb{C})$&$m+n$\\
\hline
$sp(2n,\mathbb{C})$&$gl(n,\mathbb{C})$&$n$\\
\hline
$\mathfrak{e}_6$&$\mathfrak{e}_6$&$6$\\
\hline
$\mathfrak{e}_6$&$sl(6,\mathbb{C})\oplus sl(2,\mathbb{C})$&$6$\\
\hline
$\mathfrak{e}_6$&$so(10,\mathbb{C})\oplus so(2,\mathbb{C})$&$6$\\
\hline
$\mathfrak{e}_7$&$\mathfrak{e}_7$&$7$\\
\hline
$\mathfrak{e}_7$&$sl(8,\mathbb{C})$&$7$\\
\hline
$\mathfrak{e}_7$&$so(12,\mathbb{C})\oplus sp(2,\mathbb{C})$&$7$\\
\hline
$\mathfrak{e}_7$&$so(2,\mathbb{C})\oplus\mathfrak{e}_6$&$7$\\
\hline
$\mathfrak{e}_8$&$\mathfrak{e}_8$&$8$\\
\hline
$\mathfrak{e}_8$&$so(16,\mathbb{C})$&$8$\\
\hline
$\mathfrak{e}_8$&$sp(2,\mathbb{C})\oplus\mathfrak{e}_7$&$8$\\
\hline
$\mathfrak{f}_4$&$\mathfrak{f}_4$&$4$\\
\hline
$\mathfrak{f}_4$&$so(9,\mathbb{C})$&$4$\\
\hline
$\mathfrak{f}_4$&$sp(6,\mathbb{C})\oplus sp(2,\mathbb{C})$&$4$\\
\hline
$\mathfrak{g}_2$&$\mathfrak{g}_2$&$2$\\
\hline
$\mathfrak{g}_2$&$so(4,\mathbb{C})$&$2$\\
\hline
\end{tabular}
\end{center}
\end{table}
What remain to study are $(sl(2n+1,\mathbb{C}),so(2n+1,\mathbb{C}))$, $(sl(2n,\mathbb{C}),so(2n,\mathbb{C}))$, $(sl(2n,\mathbb{C}),sp(2n,\mathbb{C}))$, $(so(2m+2n,\mathbb{C}),so(2m-1,\mathbb{C})\oplus so(2n+1,\mathbb{C}))$, $(\mathfrak{e}_6,sp(8,\mathbb{C}))$ and $(\mathfrak{e}_6,\mathfrak{f}_4)$, which are those complex symmetric pairs $(\mathfrak{g},\mathfrak{g}^\tau)$ with $\mathfrak{g}$ complex simple and $\mathrm{Rank}_\mathbb{C}\mathfrak{g}\neq\mathrm{Rank}_\mathbb{C}\mathfrak{g}'$.

\textbf{(i) $(sl(2n+1,\mathbb{C}),so(2n+1,\mathbb{C}))$}

Recall that $sl(2n+1,\mathbb{C})$ consists of $(2n+1)\times(2n+1)$ matrices with traces 0. Take all the diagonal matrices to be its Cartan subalgebra, i.e., $\mathfrak{h}_{sl(2n+1,\mathbb{C})}=\{\mathrm{diag}(a_1,a_2,\cdots,a_{2n+1})\mid\displaystyle{\sum_{i=1}^{2n+1}}a_i=0\}$, and take the simple system to be $\Delta(sl(2n+1,\mathbb{C}))=\{\alpha_k-\alpha_{k+1}\mid1\leq k\leq2n\}$ where $\alpha_k(\mathrm{diag}(a_1,a_2,\cdots,a_{2n+1}))=a_k$.

We may realize $so(2n+1,\mathbb{C})$ as the set of skew-adjoint matrices relative to the quadratic form $\displaystyle{\sum_{i=1}^{n+1}}z_iz_{2n+2-i}$, and then take all the diagonal matrices $\mathfrak{h}_{so(2n+1,\mathbb{C})}=\{\mathrm{diag}(b_1,b_2,\cdots,b_n,0,-b_n,-b_{n-1},\cdots,-b_1)\}$ to be the Cartan subalgebra of $so(2n+1,\mathbb{C})$.

It is obvious that $\mathfrak{h}_{so(2n+1,\mathbb{C})}\subseteq\mathfrak{h}_{sl(2n+1,\mathbb{C})}$. Moreover, let $H=\mathrm{diag}(n,n-1,\cdots,1,0,-1,-2,\cdots,-n)\in\mathfrak{h}_{so(2n+1,\mathbb{C})}$, and then $H$ is a hyperbolic element defining the standard Borel subalgebra of $sl(2n+1,\mathbb{C})$ with respect to $\Delta(sl(2n+1,\mathbb{C}))$; namely, the standard Borel subalgebra $\mathfrak{b}_{sl(2n+1,\mathbb{C})}$ is $so(2n+1,\mathbb{C})$-compatible.
\begin{proposition}
The standard $so(2n+1,\mathbb{C})$-compatible parabolic subalgebras of $sl(2n+1,\mathbb{C})$ are exactly those $\mathfrak{p}_\Pi$ such that either $\{\alpha_k-\alpha_{k+1},\alpha_{2n+1-k}-\alpha_{2n+2-k}\}\subseteq\Pi$ or $\{\alpha_k-\alpha_{k+1},\alpha_{2n+1-k}-\alpha_{2n+2-k}\}\cap\Pi=\phi$ for all $1\leq k\leq n$.
\end{proposition}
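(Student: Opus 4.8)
The plan is to reduce the statement to a linear-algebra question inside the diagonal Cartan subalgebra and then settle it by comparing coefficients in the basis of fundamental coweights. Since $\mathfrak{h}_{so(2n+1,\mathbb{C})}\subseteq\mathfrak{h}_{sl(2n+1,\mathbb{C})}$ and $\mathfrak{h}_{sl(2n+1,\mathbb{C})}\cap so(2n+1,\mathbb{C})=\mathfrak{h}_{so(2n+1,\mathbb{C})}$, and since $\mathfrak{p}_\Pi$ contains $\mathfrak{h}_{sl(2n+1,\mathbb{C})}$, Remark 1 tells us that $\mathfrak{p}_\Pi$ is $so(2n+1,\mathbb{C})$-compatible if and only if $\mathcal{H}(\mathfrak{h}_{sl(2n+1,\mathbb{C})},\Delta,\mathfrak{p}_\Pi)\cap\mathfrak{h}_{so(2n+1,\mathbb{C})}\neq\phi$. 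By Proposition 1 this amounts to the following concrete condition: there exist strictly positive reals $k_\alpha$, one for each $\alpha\in\Delta-\Pi$, such that $H=\sum_{\alpha\in\Delta-\Pi}k_\alpha T_\alpha$ lies in $\mathfrak{h}_{so(2n+1,\mathbb{C})}$.

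First I would record the two explicit ingredients. Writing $N=2n+1$, the defining relations $\beta(T_\alpha)=\delta_{\alpha\beta}$ together with the trace-zero condition give $T_{\alpha_k-\alpha_{k+1}}=\mathrm{diag}(\tfrac{N-k}{N},\dots,\tfrac{N-k}{N},-\tfrac{k}{N},\dots,-\tfrac{k}{N})$ with exactly $k$ leading entries equal to $\tfrac{N-k}{N}$. On the other hand, $\mathfrak{h}_{so(2n+1,\mathbb{C})}$ is exactly the fixed-point set, inside $\mathfrak{h}_{sl(2n+1,\mathbb{C})}$, of the involution $\sigma$ sending $\mathrm{diag}(a_1,\dots,a_N)$ to $\mathrm{diag}(-a_N,\dots,-a_1)$ (reverse the diagonal and negate), which is precisely the restriction of $\tau$ to the Cartan subalgebra; its fixed vectors are those with $a_i=-a_{N+1-i}$, i.e. the $\mathrm{diag}(b_1,\dots,b_n,0,-b_n,\dots,-b_1)$. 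A direct computation then yields $\sigma(T_{\alpha_k-\alpha_{k+1}})=T_{\alpha_{2n+1-k}-\alpha_{2n+2-k}}$, so $\sigma$ permutes the coweight basis by the pairing $k\leftrightarrow 2n+1-k$, which is exactly the pairing $\{\alpha_k-\alpha_{k+1},\alpha_{2n+1-k}-\alpha_{2n+2-k}\}$ occurring in the statement.

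With these in hand, the membership $H\in\mathfrak{h}_{so(2n+1,\mathbb{C})}$ is equivalent to $\sigma(H)=H$. Because the coweights $T_{\alpha_j-\alpha_{j+1}}$ form a basis of $\mathfrak{h}_{sl(2n+1,\mathbb{C})}$, I would expand $H$ and $\sigma(H)=\sum_{\alpha_j-\alpha_{j+1}\in\Delta-\Pi}k_{\alpha_j-\alpha_{j+1}}T_{\alpha_{2n+1-j}-\alpha_{2n+2-j}}$ in this basis and equate coefficients pair by pair. For each pair $\{j,2n+1-j\}$ with $1\le j\le n$ one reads off that the coefficient attached to $\alpha_j-\alpha_{j+1}$ must equal the one attached to $\alpha_{2n+1-j}-\alpha_{2n+2-j}$, with the convention that a root not in $\Delta-\Pi$ contributes coefficient $0$. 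If both roots of the pair lie in $\Delta-\Pi$, this is satisfiable by taking the two coefficients equal; if neither does, there is no constraint; but if exactly one lies in $\Delta-\Pi$, the equation forces its coefficient to vanish, contradicting strict positivity. Hence an admissible $H$ exists precisely when $\Delta-\Pi$ is stable under $j\leftrightarrow 2n+1-j$, i.e. when for every $1\le k\le n$ either both of $\alpha_k-\alpha_{k+1},\alpha_{2n+1-k}-\alpha_{2n+2-k}$ lie in $\Pi$ or neither does, which is the asserted description.

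The genuinely load-bearing point, and the one I expect to need the most care, is the interplay between $\sigma$-invariance and the strict positivity $k_\alpha\in\mathbb{R}^+$ coming from Proposition 1: it is positivity, rather than mere membership in $\mathfrak{h}_{so(2n+1,\mathbb{C})}$ with arbitrary real coefficients, that sharpens the coefficient-matching into the clean ``both-or-neither'' dichotomy and excludes the mixed case. The only real computation is the verification that $\sigma$ fixes $\mathfrak{h}_{so(2n+1,\mathbb{C})}$ pointwise and intertwines the coweights as $T_{\alpha_k-\alpha_{k+1}}\leftrightarrow T_{\alpha_{2n+1-k}-\alpha_{2n+2-k}}$; once that and the basis property are established, the conclusion is formal given Remark 1 and Proposition 1.
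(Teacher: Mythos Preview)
Your proof is correct and follows essentially the same route as the paper: reduce via Remark~1/Lemma~4 and Proposition~1 to finding a positive combination of the $T_\alpha$ lying in $\mathfrak{h}_{so(2n+1,\mathbb{C})}$, and then exploit the pairing $k\leftrightarrow 2n+1-k$ to obtain the both-or-neither condition. The only cosmetic difference is that the paper phrases the symmetry dually---observing that the simple roots $\alpha_k-\alpha_{k+1}$ and $\alpha_{2n+1-k}-\alpha_{2n+2-k}$ restrict identically to $\mathfrak{h}_{so(2n+1,\mathbb{C})}$ and then exhibiting explicit symmetric elements $T_k=\mathrm{diag}(1,\dots,1,0,\dots,0,-1,\dots,-1)$---whereas you phrase it via the action of $\sigma$ on the coweight basis; these are two sides of the same computation.
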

\begin{proof}
By Lemma 4, $\mathfrak{p}_\Pi$ is $so(2n+1,\mathbb{C})$-compatible if and only if $\mathcal{H}(\mathfrak{p}_\Pi)\cap\mathfrak{h}_{so(2n+1,\mathbb{C})}\neq\phi$. Because $\alpha_k-\alpha_{k+1}=\alpha_{2n+1-k}-\alpha_{2n+2-k}$ on $\mathfrak{h}_{so(2n+1,\mathbb{C})}$ for $1\leq k\leq n$, either both $\alpha_k-\alpha_{k+1}$ and $\alpha_{2n+1-k}-\alpha_{2n+2-k}$ lie in $\Pi$ or neither of them lies in $\Pi$. On the other hand, let $\Pi=\displaystyle{\bigcup_{j=1}^t}\{\alpha_{i_j}-\alpha_{i_j+1},\alpha_{2n+1-i_j}-\alpha_{2n+2-i_j}\}$ with $1\leq t\leq n$ and $1\leq i_1<i_2<\cdots<i_t\leq n$. Take $T_k=\mathrm{diag}(\underbrace{1,1,\cdots,1}_k,\underbrace{0,0,\cdots,0}_{2n-2k+1},\underbrace{-1,-1,\cdots,-1}_k)$ for $1\leq k\leq n$, then $(\alpha_k-\alpha_{k+1})(T_k)= (\alpha_{2n+1-k}-\alpha_{2n+2-k})(T_k)=1$ and $\alpha(T_k)=0$ for $\alpha\in\Delta(sl(2n+1,\mathbb{C}))-\{\alpha_k-\alpha_{k+1},\alpha_{2n+1-k}-\alpha_{2n+2-k}\}$. Now take $H=\displaystyle{\sum_{\mbox{\tiny$\begin{array}{c}1\leq k\leq n\\k\neq i_j\end{array}$}}}T_k$, by Proposition 1, it defines $\mathfrak{p}_\Pi$. This shows that $\mathfrak{p}_\Pi$ is $so(2n+1,\mathbb{C})$-compatible.
\end{proof}

\textbf{(ii) $(sl(2n,\mathbb{C}),so(2n,\mathbb{C}))$}

The construction for this pair is almost same as \textbf{(i)}.

Similarly, $sl(2n,\mathbb{C})$ consists of $2n\times2n$ matrices with traces 0. Take all the diagonal matrices to be its Cartan subalgebra, i.e., $\mathfrak{h}_{sl(2n,\mathbb{C})}=\{\mathrm{diag}(a_1,a_2,\cdots,a_{2n})\mid\displaystyle{\sum_{i=1}^{2n}}a_i=0\}$, and take the simple system to be $\Delta(sl(2n,\mathbb{C}))=\{\alpha_k-\alpha_{k+1}\mid1\leq k\leq2n-1\}$ where $\alpha_k(\mathrm{diag}(a_1,a_2,\cdots,a_{2n}))=a_k$.

We may realize $so(2n,\mathbb{C})$ as the set of skew-adjoint matrices relative to the quadratic form $\displaystyle{\sum_{i=1}^n}z_iz_{2n+1-i}$, and then take all the diagonal matrices $\mathfrak{h}_{so(2n,\mathbb{C})}=\{\mathrm{diag}(b_1,b_2,\cdots,b_n,-b_n,-b_{n-1},\cdots,-b_1)\}$ to be the Cartan subalgebra of $so(2n,\mathbb{C})$.

It is obvious that $\mathfrak{h}_{so(2n,\mathbb{C})}\subseteq\mathfrak{h}_{sl(2n,\mathbb{C})}$. Moreover, let $H=\mathrm{diag} (n,n-1,\cdots,1,-1,-2,\\ \cdots,-n)\in\mathfrak{h}_{so(2n,\mathbb{C})}$, and then $H$ is a hyperbolic element defining the standard Borel subalgebra of $sl(2n,\mathbb{C})$ with respect to $\Delta(sl(2n,\mathbb{C}))$; namely, the standard Borel subalgebra $\mathfrak{b}_{sl(2n+1,\mathbb{C})}$ is $so(2n,\mathbb{C})$-compatible.
\begin{proposition}
The standard $so(2n,\mathbb{C})$-compatible parabolic subalgebras of $sl(2n,\mathbb{C})$ are exactly those $\mathfrak{p}_\Pi$ such that either $\{\alpha_k-\alpha_{k+1},\alpha_{2n-k}-\alpha_{2n+1-k}\}\subseteq\Pi$ or $\{\alpha_k-\alpha_{k+1},\alpha_{2n-k}-\alpha_{2n+1-k}\}\cap\Pi=\phi$ for all $1\leq k\leq n-1$.
\end{proposition}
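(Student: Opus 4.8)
The plan is to follow the proof of Proposition 9 almost verbatim, the only genuinely new feature being the behaviour of the central simple root $\alpha_n-\alpha_{n+1}$. By the Remark following Lemma 4 together with Proposition 1, the parabolic $\mathfrak{p}_\Pi$ is $so(2n,\mathbb{C})$-compatible if and only if some hyperbolic element $H=\sum_{\alpha\in\Delta-\Pi}k_\alpha T_\alpha$ with all $k_\alpha\in\mathbb{R}^+$ happens to lie in $\mathfrak{h}_{so(2n,\mathbb{C})}$. I would therefore reduce the whole statement to understanding which simple roots can vanish on a symmetric diagonal element $\mathrm{diag}(b_1,\ldots,b_n,-b_n,\ldots,-b_1)$.

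First I would record the restrictions of the simple roots to $\mathfrak{h}_{so(2n,\mathbb{C})}$. A direct substitution shows that on $\mathfrak{h}_{so(2n,\mathbb{C})}$ one has $\alpha_k-\alpha_{k+1}=b_k-b_{k+1}=\alpha_{2n-k}-\alpha_{2n+1-k}$ for $1\leq k\leq n-1$, whereas the central root satisfies $\alpha_n-\alpha_{n+1}=2b_n$. Thus the two members of each pair $\{\alpha_k-\alpha_{k+1},\alpha_{2n-k}-\alpha_{2n+1-k}\}$ coincide as functionals on $\mathfrak{h}_{so(2n,\mathbb{C})}$, while $\alpha_n-\alpha_{n+1}$ is a nonzero functional matched to no other simple root.

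For the necessity direction I would note that any hyperbolic $H\in\mathfrak{h}_{so(2n,\mathbb{C})}$ defining $\mathfrak{p}_\Pi$ satisfies $\beta(H)=0$ precisely for $\beta\in\Pi$. Since $\alpha_k-\alpha_{k+1}$ and $\alpha_{2n-k}-\alpha_{2n+1-k}$ take the same value at such an $H$, they are simultaneously zero or nonzero there, hence simultaneously inside or outside $\Pi$; this is exactly the asserted condition for $1\leq k\leq n-1$, and the central root is left unconstrained, as the statement permits. For the sufficiency direction I would, as in Proposition 9, exhibit an explicit symmetric defining element. Put $T_k=\mathrm{diag}(\underbrace{1,\ldots,1}_{k},\underbrace{0,\ldots,0}_{2n-2k},\underbrace{-1,\ldots,-1}_{k})$ for $1\leq k\leq n-1$ and $T_n=\mathrm{diag}(\underbrace{1,\ldots,1}_{n},\underbrace{-1,\ldots,-1}_{n})$; each $T_k$ lies in $\mathfrak{h}_{so(2n,\mathbb{C})}$, satisfies $(\alpha_k-\alpha_{k+1})(T_k)=(\alpha_{2n-k}-\alpha_{2n+1-k})(T_k)=1$ (and $(\alpha_n-\alpha_{n+1})(T_n)=2$), and annihilates every other simple root. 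Given a $\Pi$ obeying the pairing condition, I would set $H=\sum T_k$ summed over those $1\leq k\leq n$ with $\alpha_k-\alpha_{k+1}\notin\Pi$. The pairing condition guarantees that each $T_k$ with $k\leq n-1$ correctly accounts for both roots of its pair, so $\beta(H)=0$ for $\beta\in\Pi$ and $\beta(H)>0$ for $\beta\in\Delta-\Pi$; as $H$ is a real diagonal matrix it is hyperbolic, and by Proposition 1 it defines $\mathfrak{p}_\Pi$, establishing $so(2n,\mathbb{C})$-compatibility.

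I expect the single point needing care --- the sole departure from the odd case --- to be the unpaired central root $\alpha_n-\alpha_{n+1}$: because its restriction $2b_n$ to $\mathfrak{h}_{so(2n,\mathbb{C})}$ is nonzero, it is not forced to match any other simple root, which is why the pairing condition runs only over $1\leq k\leq n-1$ and why the construction requires the separate element $T_n$. Everything else is routine bookkeeping parallel to Proposition 9.
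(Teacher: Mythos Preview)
Your argument is correct and is exactly the approach the paper takes: its own proof merely says to repeat the proof of Proposition~7 with $2n$ in place of $2n+1$, and that is precisely what you do, including the correct treatment of the unpaired middle root $\alpha_n-\alpha_{n+1}$ via the extra element $T_n$. (Your references to ``Proposition~9'' should presumably read ``Proposition~7'', the odd case $(sl(2n+1,\mathbb{C}),so(2n+1,\mathbb{C}))$, since Proposition~9's proof in the paper points back to the present Proposition~8.)
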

\begin{proof}
The proof is almost same as that of Proposition 7 by replacing $2n+1$ with $2n$.
\end{proof}

\textbf{(iii)} $(sl(2n,\mathbb{C}),sp(2n,\mathbb{C}))$

Let's take the same Cartan subalgebra and simple system of $sl(2n,\mathbb{C})$ as in \textbf{(ii)}.

In most of the time, $sp(2n,\mathbb{C})$ is realized as \[\{\left(\begin{array}{cc}A&B\\C&-A^t\end{array}\right)\mid u,v\in\mathbb{C}^3,A\in gl(3,\mathbb{C}),B,C \mathrm{symmetric}\}.\] However, if conjugated by an element in $sl(2n,\mathbb{C})$ given by the block diagonal matrix $\mathrm{diag}(I_n,J_n)$ or $\mathrm{diag}(I'_n,J_n)$ depending on their determinants, where $I_n$ is $n\times n$ identity matrix, $I'_n$ denotes $\mathrm{diag}(-1,I_{n-1})$ and $J_n$ denotes the $n\times n$ anti-diagonal matrix with all anti-diagonal entries 1, all the diagonal matrices are $\{\mathrm{diag}(b_1,b_2,\cdots,b_n,-b_n,-b_{n-1},\cdots,-b_1)\}$ which form a Cartan subalgebra of $sp(2n,\mathbb{C})$.

Now things are totally same as \textbf{(ii)}. That is, the standard Borel subalgebra of $sl(2n,\mathbb{C})$ are $sp(2n,\mathbb{C})$-compatible. Moreover, the following conclusion holds.
\begin{proposition}
The standard $sp(2n,\mathbb{C})$-compatible parabolic subalgebras of $sl(2n,\mathbb{C})$ are exactly those $\mathfrak{p}_\Pi$ such that either $\{\alpha_k-\alpha_{k+1},\alpha_{2n-k}-\alpha_{2n+1-k}\}\subseteq\Pi$ or $\{\alpha_k-\alpha_{k+1},\alpha_{2n-k}-\alpha_{2n+1-k}\}\cap\Pi=\phi$ for all $1\leq k\leq n-1$.
\end{proposition}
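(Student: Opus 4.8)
The plan is to reduce the statement to Proposition 8. The essential observation is that, after the conjugation described above, the copy of $sp(2n,\mathbb{C})$ under consideration has its Cartan subalgebra $\mathfrak{h}_{sp(2n,\mathbb{C})}=\{\mathrm{diag}(b_1,b_2,\cdots,b_n,-b_n,-b_{n-1},\cdots,-b_1)\}$ embedded into $\mathfrak{h}_{sl(2n,\mathbb{C})}$ in precisely the same way that $\mathfrak{h}_{so(2n,\mathbb{C})}$ was embedded in part (ii). Since, by Lemma 4 and Remark 1, the $\mathfrak{g}'$-compatibility of a standard parabolic $\mathfrak{p}_\Pi$ depends only on whether $\mathcal{H}(\mathfrak{h}_{sl(2n,\mathbb{C})},\Delta,\mathfrak{p}_\Pi)$ meets the Cartan subalgebra of $\mathfrak{g}'$, and this intersection condition is governed entirely by how that Cartan subalgebra sits inside $\mathfrak{h}_{sl(2n,\mathbb{C})}$, the classification must be word-for-word identical to that of Proposition 8.

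Concretely, I would first record the matrix computation showing that conjugating the standard symplectic algebra by $\mathrm{diag}(I_n,J_n)$ (or by $\mathrm{diag}(I'_n,J_n)$ when the determinant requires it, so as to stay in $SL$) carries its diagonal Cartan subalgebra onto the displayed $\mathfrak{h}_{sp(2n,\mathbb{C})}$. Next I would invoke the restriction identity $\alpha_k-\alpha_{k+1}=\alpha_{2n-k}-\alpha_{2n+1-k}$ on $\mathfrak{h}_{sp(2n,\mathbb{C})}$ for $1\leq k\leq n-1$, exactly as in part (ii): any hyperbolic element of $\mathfrak{h}_{sp(2n,\mathbb{C})}$ therefore assigns equal values to $\alpha_k-\alpha_{k+1}$ and $\alpha_{2n-k}-\alpha_{2n+1-k}$, which forces the two-sided condition on $\Pi$. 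Conversely, the elements $T_k=\mathrm{diag}(\underbrace{1,\cdots,1}_k,\underbrace{0,\cdots,0}_{2n-2k},\underbrace{-1,\cdots,-1}_k)$ lie in $\mathfrak{h}_{sp(2n,\mathbb{C})}$, and summing the appropriate ones and applying Proposition 1 realizes every admissible $\mathfrak{p}_\Pi$ as $\mathfrak{p}(H)$ with $H\in\mathfrak{h}_{sp(2n,\mathbb{C})}$. Both inclusions then follow as in Proposition 8, with $2n+1$ replaced by $2n$.

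I expect the main obstacle to be this first step rather than any representation-theoretic content: one must check that the conjugating element genuinely places the standard symplectic Cartan in the claimed position, and that replacing $sp(2n,\mathbb{C})$ by this $\mathrm{Int}(sl(2n,\mathbb{C}))$-conjugate does not affect the answer. The latter is harmless here because conjugation is an inner automorphism, so by Lemma 3 it carries $\mathcal{H}(\mathfrak{p})$ to $\mathcal{H}(\mathrm{Ad}(g)\mathfrak{p})$ and, together with Proposition 4, lets us classify compatibility against whichever conjugate of $sp(2n,\mathbb{C})$ has the convenient Cartan. Once the two Cartan subalgebras are seen to coincide as subspaces of $\mathfrak{h}_{sl(2n,\mathbb{C})}$, no new computation remains and the proof closes by direct appeal to Proposition 8.
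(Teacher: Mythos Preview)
Your proposal is correct and matches the paper's approach exactly: the paper's own proof is the single sentence ``The proof is totally same as that of Proposition 8,'' resting on precisely the observation you spell out, that after the conjugation the Cartan subalgebra $\mathfrak{h}_{sp(2n,\mathbb{C})}$ coincides as a subspace of $\mathfrak{h}_{sl(2n,\mathbb{C})}$ with $\mathfrak{h}_{so(2n,\mathbb{C})}$, so the compatibility test of Remark~1 is literally the same. Your additional care about the conjugation step (and the appeal to Proposition~4) goes beyond what the paper provides---it simply adopts the conjugated realization as its working definition of $sp(2n,\mathbb{C})$---but does no harm.
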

\begin{proof}
The proof is totally same as that of Proposition 8.
\end{proof}

\textbf{(iv)} $(so(2m+2n,\mathbb{C}),so(2m-1,\mathbb{C})\oplus so(2n+1,\mathbb{C}))$

As constructed in \textbf{(ii)}, $so(2m+2n,\mathbb{C})$ is the set of skew-adjoint matrices relative to the quadratic form $\displaystyle{\sum_{i=1}^{m+n}}z_iz_{2m+2n+1-i}$. Take all the diagonal matrices $\mathfrak{h}_{so(2m+2n,\mathbb{C})}=\{\mathrm{diag}(a_1,a_2,\cdots,a_{m+n},-a_{m+n},-a_{m+n},\cdots,-a_1)\}$ to be the Cartan subalgebra of $so(2m+2n,\mathbb{C})$, and take the simple system to be $\Delta(so(2m+2n,\mathbb{C}))=\{\alpha_k-\alpha_{k+1}\mid1\leq k\leq m+n-1\}\cup\{\alpha_{m+n-1}+\alpha_{m+n}\}$ where $\alpha_k(\mathrm{diag}(a_1,a_2,\cdots,a_{m+n},-a_{m+n},-a_{m+n},\cdots,-a_1))=a_k$.

We may realize $so(2m-1,\mathbb{C})\oplus so(2n+1,\mathbb{C})$ as the set of direct sum of two skew-adjoint matrices relative to the direct product of two quadratic forms $(z_{m+n}^2+\displaystyle{\sum_{i=1}^{m-1}}z_iz_{2m+2n+1-i},z_{m+n+1}^2+\displaystyle{\sum_{i=m}^{m+n-1}}z_iz_{2m+2n+1-i})$, and then take all the diagonal matrices $\mathfrak{h}_{so(2m-1,\mathbb{C})\oplus so(2n+1,\mathbb{C})}=\{\mathrm{diag}(b_1,b_2,\cdots,b_{m-1},0,-b_{m-1},-b_{m-2},\cdots,-b_1)\}\oplus \{\mathrm{diag}(b_m,b_{m+1},\cdots,b_{m+n-1},0,-b_{m+n-1},-b_{m+n-2},\cdots,-b_{-m})\}\cong \{\mathrm{diag}(b_1,b_2,\cdots,\\b_{m+n-1},0,0,-b_{m+n-1},-b_{m+n-2},\cdots,-b_1)\}$ to be the Cartan subalgebra of $so(2m-1,\mathbb{C})\oplus so(2n+1,\mathbb{C})$.

It is obvious that $\mathfrak{h}_{so(2m-1,\mathbb{C})\oplus so(2n+1,\mathbb{C})}\subseteq\mathfrak{h}_{so(2m+2n,\mathbb{C})}$. Moreover, let $H=\mathrm{diag} (m+n-1,m+n-2,\cdots,1,0,0,-1,-2,\cdots,-m-n+1)\in\mathfrak{h}_{so(2m-1,\mathbb{C})\oplus so(2n+1,\mathbb{C})}$, and then $H$ is a hyperbolic element defining the standard Borel subalgebra of $so(2m+2n,\mathbb{C})$ with respect to $\Delta(so(2m+2n,\mathbb{C}))$; namely, the standard Borel subalgebra $\mathfrak{b}_{so(2m-1,\mathbb{C})\oplus so(2n+1,\mathbb{C})}$ is $so(2m-1,\mathbb{C})\oplus so(2n+1,\mathbb{C})$-compatible.
\begin{proposition}
The standard $so(2m-1,\mathbb{C})\oplus so(2n+1,\mathbb{C})$-compatible parabolic subalgebras of $so(2m+2n,\mathbb{C})$ are exactly those $\mathfrak{p}_\Pi$ such that $\{\alpha_{m+n-1}\pm\alpha_{m+n}\}\subseteq\Pi$ or $\{\alpha_{m+n-1}\pm\alpha_{m+n}\}\cap\Pi=\phi$.
\end{proposition}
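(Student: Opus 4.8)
The plan is to apply the compatibility test recorded in Remark 1. Since $\mathfrak{h}':=\mathfrak{h}_{so(2m-1,\mathbb{C})\oplus so(2n+1,\mathbb{C})}$ is a Cartan subalgebra of $\mathfrak{g}'$ contained in $\mathfrak{h}:=\mathfrak{h}_{so(2m+2n,\mathbb{C})}$, with $\mathfrak{h}\cap\mathfrak{g}'=\mathfrak{h}'$, Lemma 4 shows that $\mathfrak{p}_\Pi$ is $\mathfrak{g}'$-compatible precisely when $\mathcal{H}(\mathfrak{h},\Delta,\mathfrak{p}_\Pi)\cap\mathfrak{h}'\neq\phi$; and by Proposition 1 this is the question of whether some $\sum_{\alpha\in\Delta-\Pi}k_\alpha T_\alpha$ with every $k_\alpha\in\mathbb{R}^+$ lands in $\mathfrak{h}'$. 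The first thing I would record is that, from the explicit description of the two Cartan subalgebras, $\mathfrak{h}'$ is exactly the hyperplane $\{H\in\mathfrak{h}\mid\alpha_{m+n}(H)=0\}$: the defining feature of $\mathfrak{h}'$ is that its two central diagonal entries, in positions $m+n$ and $m+n+1$, vanish, while all remaining coordinates are unconstrained. The whole problem therefore collapses to computing a single number, the $\alpha_{m+n}$-coordinate of $\sum_{\alpha\in\Delta-\Pi}k_\alpha T_\alpha$.

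The key computation is to find $\alpha_{m+n}(T_\alpha)$ for each simple root $\alpha$, that is, the last coordinate of each fundamental coweight in this $D_{m+n}$ realization. Rather than invert the Cartan matrix, I would use the identity
\[\alpha_{m+n}=\tfrac12\big((\alpha_{m+n-1}+\alpha_{m+n})-(\alpha_{m+n-1}-\alpha_{m+n})\big),\]
which expresses the last coordinate functional as one half the difference of the two ``fork'' simple roots $\beta:=\alpha_{m+n-1}+\alpha_{m+n}$ and $\beta':=\alpha_{m+n-1}-\alpha_{m+n}$. Since $\{T_\alpha\}$ is by definition dual to the simple roots, pairing this identity against $T_\alpha$ gives at once $\alpha_{m+n}(T_\beta)=\tfrac12$, $\alpha_{m+n}(T_{\beta'})=-\tfrac12$, and $\alpha_{m+n}(T_\alpha)=0$ for every other simple root $\alpha$. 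The structural point --- and the reason the final criterion mentions only the two fork nodes --- is that no coweight except these two sees the last coordinate at all; this is the heart of the argument, though with the identity above it is short rather than laborious.

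With these values in hand the proof finishes by a three-way case analysis according to how $\{\beta,\beta'\}$ meets $\Pi$. If both fork roots lie in $\Pi$, they are absent from $\sum_{\alpha\in\Delta-\Pi}k_\alpha T_\alpha$, whose last coordinate is then $0$ for every choice of the remaining positive coefficients, so $\mathfrak{p}_\Pi$ is compatible. If both lie outside $\Pi$, the last coordinate equals $\tfrac12 k_\beta-\tfrac12 k_{\beta'}$, which vanishes once we take $k_{\beta'}=k_\beta$, and again $\mathfrak{p}_\Pi$ is compatible. If exactly one fork root lies outside $\Pi$, the last coordinate reduces to a single surviving term $\pm\tfrac12 k$ with $k>0$, which can never be $0$; hence $\mathcal{H}(\mathfrak{h},\Delta,\mathfrak{p}_\Pi)\cap\mathfrak{h}'=\phi$ and $\mathfrak{p}_\Pi$ is not compatible. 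The only place any care is needed is this last, mixed case: it is the strict positivity $k_\alpha\in\mathbb{R}^+$ supplied by Proposition 1 that blocks the cancellation, so I would flag that hypothesis explicitly. Assembling the three cases yields exactly the stated dichotomy, that $\{\alpha_{m+n-1}\pm\alpha_{m+n}\}\subseteq\Pi$ or $\{\alpha_{m+n-1}\pm\alpha_{m+n}\}\cap\Pi=\phi$.
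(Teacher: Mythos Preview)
Your proof is correct and follows essentially the same line as the paper's: both hinge on the observation that $\mathfrak{h}'$ is the hyperplane $\alpha_{m+n}=0$ in $\mathfrak{h}$ (equivalently, that the two fork simple roots $\alpha_{m+n-1}\pm\alpha_{m+n}$ restrict equally to $\mathfrak{h}'$), and then carry out the same three-way case split. The only difference is packaging: the paper exhibits explicit diagonal matrices for the $T_{\alpha_k-\alpha_{k+1}}$ and for the combination $T_\beta+T_{\beta'}=\mathrm{diag}(1,\dots,1,0,0,-1,\dots,-1)$ to see they lie in $\mathfrak{h}'$, whereas you compute $\alpha_{m+n}(T_\alpha)$ directly from the dual-basis relation, which is a cleaner but equivalent route.
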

\begin{proof}
First, because $\alpha_{m+n-1}+\alpha_{m+n}=\alpha_{m+n-1}-\alpha_{m+n}$ on $\mathfrak{h}_{so(2m-1,\mathbb{C})\oplus so(2n+1,\mathbb{C})}$, if there appears exactly one of them in $\Pi$, $\mathfrak{p}_\Pi$ is not $so(2m-1,\mathbb{C})\oplus so(2n+1,\mathbb{C})$-compatible by Lemma 4. Second, It is not easy to obtain $T_{\alpha_k-\alpha_{k+1}}=\mathrm{diag}(\underbrace{1,1,\cdots,1}_k,\underbrace{0,0,\cdots,0}_{2m+2n-2k},\underbrace{-1,-1,\cdots,-1}_k)\in \mathfrak{h}_{so(2m-1,\mathbb{C})\oplus so(2n+1,\mathbb{C})}$ for $1\leq k\leq m+n-1$. Hence, if $\{\alpha_{m+n-1}\pm\alpha_{m+n}\}\subseteq\Pi$, then $H=\displaystyle{\sum_{\alpha\in\Delta(so(2m+2n,\mathbb{C}))-\Pi}T_\alpha}$ just defines $\mathfrak{p}_\Pi$ by Proposition 1, so $\mathfrak{p}_\Pi$ is $so(2m-1,\mathbb{C})\oplus so(2n+1,\mathbb{C})$-compatible. Third, if $\Pi\cap\{\alpha_{m+n-1}\pm\alpha_{m+n}\}=\phi$, then $H=\displaystyle{\sum_{\alpha\in\Delta(so(2m+2n,\mathbb{C}))-\Pi-\{\alpha_{m+n-1}\pm\alpha_{m+n}\}}T_\alpha}+ \mathrm{diag}(1,1,\cdots,1,0,0,-1,-1,\cdots,-1)$ just defines $\mathfrak{p}_\Pi$, so $\mathfrak{p}_\Pi$ is $so(2m-1,\mathbb{C})\oplus so(2n+1,\mathbb{C})$-compatible. This finishes the proof.
\end{proof}

\textbf{(v)} $(\mathfrak{e}_6,sp(8,\mathbb{C}))$

Let $\mathfrak{h}_{\mathfrak{e}_6}$ be a Cartan subalgebra of $\mathfrak{e}_6$, and its dual space $\mathfrak{h}_{\mathfrak{e}_6}^*$ is complex linearly spanned by $\{\alpha_k\mid1\leq k\leq6\}$ with the following Cartan matrix. \[\left(\begin{array}{cccccc}2&0&0&-1&0&0\\0&2&-1&0&0&0\\0&-1&2&-1&0&0\\-1&0&-1&2&-1&0\\0&0&0&-1&2&-1\\0&0&0&0&-1&2\end{array}\right)\]
A simple system of $sp(8,\mathbb{C})$ is give by
\begin{eqnarray*}
\begin{array}{rclcrcl}
\beta_1&=&\alpha_1+\frac{1}{2}\alpha_3+\alpha_4+\frac{1}{2}\alpha_5,\\
\beta_2&=&\frac{1}{2}\alpha_2+\frac{1}{2}\alpha_6,\\
\beta_3&=&\frac{1}{2}\alpha_3+\frac{1}{2}\alpha_5,\\
\beta_4&=&\alpha_4,
\end{array}
\end{eqnarray*}
Where $\beta_4$ is the long root.
A Cartan subalgebra $\mathfrak{h}_{sp(8,\mathbb{C})}$ of $sp(8,\mathbb{C})$ contained in $\mathfrak{h}_{\mathfrak{e}_6}^*$ is complex linearly spanned by the co-roots:
\begin{eqnarray*}
\begin{array}{rclcrcl}
H_{\beta_1}&=&2H_{\alpha_1}+H_{\alpha_3}+2H_{\alpha_4}+H_{\alpha_5},\\
H_{\beta_2}&=&H_{\alpha_2}+H_{\alpha_6},\\
H_{\beta_3}&=&H_{\alpha_3}+H_{\alpha_5},\\
H_{\beta_4}&=&H_{\alpha_4}.
\end{array}
\end{eqnarray*}
Take $H=7H_{\beta_1}+10H_{\beta_2}+12H_{\beta_3}+13H_{\beta_4}$, and one checks easily that the standard Borel subalgebra $\mathfrak{b}_{\mathfrak{e}_6}$ with respect to the simple system $\Delta(\mathfrak{e}_6)$ is $sp(8,\mathbb{C})$-compatible.
\begin{proposition}
The standard $sp(8,\mathbb{C})$-compatible parabolic subalgebras of $\mathfrak{e}_6$ are exactly those $\mathfrak{p}_\Pi$ such that
\begin{enumerate}[(1)]
\item either $\{\alpha_2,\alpha_6\}\subseteq\Pi$ or $\{\alpha_2,\alpha_6\}\cap\Pi=\phi$;
\item either $\{\alpha_3,\alpha_5\}\subseteq\Pi$ or $\{\alpha_3,\alpha_5\}\cap\Pi=\phi$.
\end{enumerate}
\end{proposition}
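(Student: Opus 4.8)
The plan is to mirror the proofs of Propositions 7--10. By Lemma 4 together with Remark 2, $\mathfrak{p}_\Pi$ is $sp(8,\mathbb{C})$-compatible if and only if $\mathcal{H}(\mathfrak{h}_{\mathfrak{e}_6},\Delta(\mathfrak{e}_6),\mathfrak{p}_\Pi)\cap\mathfrak{h}_{sp(8,\mathbb{C})}\neq\phi$, and by Proposition 1 this amounts to producing a hyperbolic element $H\in\mathfrak{h}_{sp(8,\mathbb{C})}$ with $\alpha_i(H)=0$ for $\alpha_i\in\Pi$ and $\alpha_i(H)>0$ for $\alpha_i\in\Delta(\mathfrak{e}_6)-\Pi$. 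So everything reduces to understanding which sign patterns $(\alpha_1(H),\dots,\alpha_6(H))$ are realizable as $H$ ranges over the hyperbolic elements of $\mathfrak{h}_{sp(8,\mathbb{C})}$.

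First I would write a general hyperbolic element as $H=\sum_{j=1}^{4}x_jH_{\beta_j}$ with $x_j\in\mathbb{R}$ and evaluate $\alpha_i(H)$ for $1\le i\le 6$, using that $\alpha_i(H_{\alpha_k})$ is the $(i,k)$-entry of the stated Cartan matrix together with the expressions of the $H_{\beta_j}$ in terms of the $H_{\alpha_k}$. Such an $H$ is automatically hyperbolic in $\mathfrak{e}_6$, since each $\alpha_i(H)$ is real and every $\mathfrak{e}_6$-root is an integer combination of the $\alpha_i$. The decisive outcome of this calculation is the pair of identities $\alpha_2(H)=\alpha_6(H)$ and $\alpha_3(H)=\alpha_5(H)$, valid for every $H\in\mathfrak{h}_{sp(8,\mathbb{C})}$; these are the analogue of the relation $\alpha_k-\alpha_{k+1}=\alpha_{2n+1-k}-\alpha_{2n+2-k}$ on $\mathfrak{h}_{so(2n+1,\mathbb{C})}$ exploited in Proposition 7, and they reflect the fact that the six restrictions $\alpha_i|_{\mathfrak{h}_{sp(8,\mathbb{C})}}$ span only the $4$-dimensional space $\mathfrak{h}_{sp(8,\mathbb{C})}^{*}$.

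For the necessity of (1) and (2): if $\mathfrak{p}_\Pi$ is compatible, its defining $H$ obeys the two identities, so $\alpha_2\in\Pi\Leftrightarrow\alpha_2(H)=0\Leftrightarrow\alpha_6(H)=0\Leftrightarrow\alpha_6\in\Pi$, giving (1), and likewise (2) from $\alpha_3(H)=\alpha_5(H)$. For sufficiency I would record that the $4\times4$ matrix $M=(\alpha_i(H_{\beta_j}))_{1\le i,j\le4}$ satisfies $\det M=2\neq0$, so that $\alpha_1|_{\mathfrak{h}_{sp(8,\mathbb{C})}},\dots,\alpha_4|_{\mathfrak{h}_{sp(8,\mathbb{C})}}$ form a basis of $\mathfrak{h}_{sp(8,\mathbb{C})}^{*}$ and the values $(\alpha_1(H),\alpha_2(H),\alpha_3(H),\alpha_4(H))$ can be prescribed arbitrarily by a suitable choice of $(x_1,x_2,x_3,x_4)$. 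Given $\Pi$ satisfying (1) and (2), I would take these four values to be $0$ at the indices lying in $\Pi$ and strictly positive otherwise; conditions (1) and (2) then force the remaining values $\alpha_5(H)=\alpha_3(H)$ and $\alpha_6(H)=\alpha_2(H)$ to carry the correct signs relative to $\Pi$ as well, so the resulting $H$ defines $\mathfrak{p}_\Pi$ by Proposition 1, proving $\mathfrak{p}_\Pi$ is $sp(8,\mathbb{C})$-compatible.

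The only genuine work is the linear-algebra computation in the second paragraph: expanding $\alpha_i(H_{\beta_j})$ through the Cartan matrix to extract the two identities and to confirm $\det M\neq0$. I expect no conceptual obstacle beyond this bookkeeping; once the two identities and the invertibility of $M$ are in hand, both directions are formal.
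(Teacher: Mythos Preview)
Your proposal is correct and follows essentially the same approach as the paper's proof: both deduce necessity from the identities $\alpha_2|_{\mathfrak{h}_{sp(8,\mathbb{C})}}=\alpha_6|_{\mathfrak{h}_{sp(8,\mathbb{C})}}$ and $\alpha_3|_{\mathfrak{h}_{sp(8,\mathbb{C})}}=\alpha_5|_{\mathfrak{h}_{sp(8,\mathbb{C})}}$ via Lemma~4, and obtain sufficiency by producing, for each admissible $\Pi$, a hyperbolic $H\in\mathfrak{h}_{sp(8,\mathbb{C})}$ with the prescribed sign pattern. The only cosmetic difference is that the paper exhibits the explicit dual elements $T_{\alpha_1},T_{\{\alpha_2,\alpha_6\}},T_{\{\alpha_3,\alpha_5\}},T_{\alpha_4}\in\mathfrak{h}_{sp(8,\mathbb{C})}$ and sums them as in Proposition~10, whereas you infer their existence from $\det M\neq0$; these are two packagings of the same linear-algebra computation.
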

\begin{proof}
First, because $\alpha_2=\alpha_6$ on $\mathfrak{h}_{sp(8,\mathbb{C})}$, if there appears exactly one of them in $\Pi$, $\mathfrak{p}_\Pi$ is not $sp(8,\mathbb{C})$-compatible by Lemma 4. The same for $\{\alpha_3,\alpha_5\}$. Second, it is not easy to obtain $T_{\alpha_1}=H_{\beta_1}\in\mathfrak{h}_{sp(8,\mathbb{C})}$ and $T_{\alpha_4}=3H_{\beta_1}+4H_{\beta_2}+5H_{\beta_3}+6H_{\beta_4}\in\mathfrak{h}_{sp(8,\mathbb{C})}$. Moreover, $T_{\{\alpha_2,\alpha_6\}}=H_{\beta_1}+2H_{\beta_2}+2H_{\beta_3}+2H_{\beta_4}\in\mathfrak{h}_{sp(8,\mathbb{C})}$ is the element such that $\alpha_2(T_{\{\alpha_2,\alpha_6\}})=\alpha_6(T_{\{\alpha_2,\alpha_6\}})=1$ and $\alpha_i(T_{\{\alpha_2,\alpha_6\}})=0$ for $i=1,3,4,5$. Similarly, $T_{\{\alpha_3,\alpha_5\}}=2H_{\beta_1}+3H_{\beta_2}+4H_{\beta_3}+4H_{\beta_4}$ is the element such that $\alpha_3(T_{\{\alpha_3,\alpha_5\}})=\alpha_5(T_{\{\alpha_3,\alpha_5\}})=1$ and $\alpha_i(T_{\{\alpha_3,\alpha_5\}})=0$ for $i=1,2,4,6$. Finally, given a subset $\Pi$ of $\Delta(\mathfrak{e}_6)$ satisfying the assumptions in the proposition, one can take the summation of some elements in $\{T_{\alpha_1},T_{\alpha_4},T_{\{\alpha_2,\alpha_6\}},T_{\{\alpha_3,\alpha_5\}}\}$ depending on $\Pi$ just as the proof of Proposition 10. This finishes the proof.
\end{proof}

\textbf{(vi)} $(\mathfrak{e}_6,\mathfrak{f}_4)$

A simple system of $\mathfrak{f}_4$ is give by
\begin{eqnarray*}
\begin{array}{rclcrcl}
\beta_1&=&\frac{1}{2}\alpha_2+\frac{1}{2}\alpha_6,\\
\beta_2&=&\frac{1}{2}\alpha_3+\frac{1}{2}\alpha_5,\\
\beta_3&=&\alpha_4,\\
\beta_4&=&\alpha_1,
\end{array}
\end{eqnarray*}
Where $\beta_1$ and $\beta_2$ are short roots while $\beta_3$ and $\beta_4$ are long roots.
A Cartan subalgebra $\mathfrak{h}_{\mathfrak{f}_4}$ of $\mathfrak{f}_4$ contained in $\mathfrak{h}_{\mathfrak{e}_6}^*$ is complex linearly spanned by the co-roots:
\begin{eqnarray*}
\begin{array}{rclcrcl}
H_{\beta_1}&=&H_{\alpha_2}+H_{\alpha_6},\\
H_{\beta_2}&=&H_{\alpha_3}+H_{\alpha_5},\\
H_{\beta_3}&=&H_{\alpha_4},\\
H_{\beta_4}&=&H_{\alpha_1}.
\end{array}
\end{eqnarray*}
Take $H=8H_{\beta_1}+15H_{\beta_2}+21H_{\beta_3}+11H_{\beta}$, and one checks easily that the standard Borel subalgebra $\mathfrak{b}_{\mathfrak{e}_6}$ with respect to the simple system $\Delta(\mathfrak{e}_6)$ is $\mathfrak{f}_4$-compatible.
\begin{proposition}
The standard $\mathfrak{f}_4$-compatible parabolic subalgebras of $\mathfrak{e}_6$ are exactly those $\mathfrak{p}_\Pi$ such that
\begin{enumerate}[(1)]
\item either $\{\alpha_2,\alpha_6\}\subseteq\Pi$ or $\{\alpha_2,\alpha_6\}\cap\Pi=\phi$;
\item either $\{\alpha_3,\alpha_5\}\subseteq\Pi$ or $\{\alpha_3,\alpha_5\}\cap\Pi=\phi$.
\end{enumerate}
\end{proposition}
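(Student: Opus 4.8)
The plan is to mirror the proof of Proposition 11, using that $\mathfrak{h}_{\mathfrak{f}_4}\subseteq\mathfrak{h}_{\mathfrak{e}_6}$ and that $\mathfrak{b}_{\mathfrak{e}_6}$ is already $\mathfrak{f}_4$-compatible. By Lemma 4 together with Remark 2, the parabolic subalgebra $\mathfrak{p}_\Pi$ is $\mathfrak{f}_4$-compatible if and only if $\mathcal{H}(\mathfrak{h}_{\mathfrak{e}_6},\Delta(\mathfrak{e}_6),\mathfrak{p}_\Pi)\cap\mathfrak{h}_{\mathfrak{f}_4}\neq\phi$, and by Proposition 1 every element of $\mathcal{H}(\mathfrak{h}_{\mathfrak{e}_6},\Delta(\mathfrak{e}_6),\mathfrak{p}_\Pi)$ has the form $\sum_{\alpha\in\Delta(\mathfrak{e}_6)-\Pi}k_\alpha T_\alpha$ with every $k_\alpha\in\mathbb{R}^+$. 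Consequently, for a simple root $\alpha$ and such an $H$ one has $\alpha(H)=k_\alpha$ when $\alpha\notin\Pi$ and $\alpha(H)=0$ when $\alpha\in\Pi$; that is, $\alpha\in\Pi\iff\alpha(H)=0$. The entire question thus reduces to deciding which vanishing patterns of $(\alpha_i(H))_{1\le i\le6}$ can be realized by a hyperbolic $H$ constrained to lie in the smaller Cartan subalgebra $\mathfrak{h}_{\mathfrak{f}_4}$.

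For necessity I would read off the folding relations from the given simple system of $\mathfrak{f}_4$, namely $\beta_1=\frac{1}{2}(\alpha_2+\alpha_6)$ and $\beta_2=\frac{1}{2}(\alpha_3+\alpha_5)$, which force the equalities of functionals $\alpha_2=\alpha_6$ and $\alpha_3=\alpha_5$ on $\mathfrak{h}_{\mathfrak{f}_4}$. Hence any $H\in\mathfrak{h}_{\mathfrak{f}_4}$ satisfies $\alpha_2(H)=\alpha_6(H)$ and $\alpha_3(H)=\alpha_5(H)$, and combining this with the equivalence $\alpha\in\Pi\iff\alpha(H)=0$ gives $\alpha_2\in\Pi\iff\alpha_6\in\Pi$ and $\alpha_3\in\Pi\iff\alpha_5\in\Pi$, which are precisely conditions (1) and (2).

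For sufficiency I would exhibit an explicit hyperbolic element of $\mathfrak{h}_{\mathfrak{f}_4}$ defining $\mathfrak{p}_\Pi$, built from the four $\mathfrak{e}_6$-fundamental-coweight blocks that survive in $\mathfrak{h}_{\mathfrak{f}_4}$. Solving the linear systems $\alpha_i(T)=$ (prescribed values) in the co-root basis $\{H_{\beta_1},H_{\beta_2},H_{\beta_3},H_{\beta_4}\}$ over the $\mathfrak{e}_6$ Cartan matrix yields
\begin{eqnarray*}
T_{\alpha_1}&=&H_{\beta_1}+2H_{\beta_2}+3H_{\beta_3}+2H_{\beta_4},\\
T_{\alpha_4}&=&2H_{\beta_1}+4H_{\beta_2}+6H_{\beta_3}+3H_{\beta_4},\\
T_{\{\alpha_2,\alpha_6\}}&=&2H_{\beta_1}+3H_{\beta_2}+4H_{\beta_3}+2H_{\beta_4},\\
T_{\{\alpha_3,\alpha_5\}}&=&3H_{\beta_1}+6H_{\beta_2}+8H_{\beta_3}+4H_{\beta_4},
\end{eqnarray*}
all of which lie in $\mathfrak{h}_{\mathfrak{f}_4}$, where $T_{\{\alpha_2,\alpha_6\}}$ satisfies $\alpha_2=\alpha_6=1$ and vanishes on $\alpha_1,\alpha_3,\alpha_4,\alpha_5$, and similarly $T_{\{\alpha_3,\alpha_5\}}$ satisfies $\alpha_3=\alpha_5=1$ and vanishes on $\alpha_1,\alpha_2,\alpha_4,\alpha_6$. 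Given $\Pi$ obeying (1) and (2), I would take $H$ to be the sum of those blocks whose supporting simple roots all lie outside $\Pi$. Since $T_{\{\alpha_2,\alpha_6\}}=T_{\alpha_2}+T_{\alpha_6}$ and $T_{\{\alpha_3,\alpha_5\}}=T_{\alpha_3}+T_{\alpha_5}$, conditions (1) and (2) guarantee that $\Delta(\mathfrak{e}_6)-\Pi$ is a union of these blocks, so $H=\sum_{\alpha\in\Delta(\mathfrak{e}_6)-\Pi}T_\alpha$; by Proposition 1 this places $H$ in $\mathcal{H}(\mathfrak{h}_{\mathfrak{e}_6},\Delta(\mathfrak{e}_6),\mathfrak{p}_\Pi)\cap\mathfrak{h}_{\mathfrak{f}_4}$, proving $\mathfrak{f}_4$-compatibility.

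The only genuine computation is the determination of the four co-weight blocks and the verification that they remain in $\mathfrak{h}_{\mathfrak{f}_4}$; this is the same linear-algebra step over the $\mathfrak{e}_6$ Cartan matrix carried out in Proposition 11, and it is routine once the folding relations $\alpha_2=\alpha_6$ and $\alpha_3=\alpha_5$ are recorded, so I do not expect a substantive obstacle beyond bookkeeping. As a consistency check that the expressions are correct, their total equals $8H_{\beta_1}+15H_{\beta_2}+21H_{\beta_3}+11H_{\beta_4}$, which recovers the element $H$ used earlier to witness the $\mathfrak{f}_4$-compatibility of $\mathfrak{b}_{\mathfrak{e}_6}$, i.e.\ the case $\Pi=\phi$.
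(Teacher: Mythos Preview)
Your proposal is correct and follows exactly the paper's approach: the paper's proof simply says it is parallel to Proposition 11 and records the same four elements $T_{\alpha_1}$, $T_{\{\alpha_2,\alpha_6\}}$, $T_{\{\alpha_3,\alpha_5\}}$, $T_{\alpha_4}$ in the coroot basis of $\mathfrak{h}_{\mathfrak{f}_4}$, with the identical coefficients you computed. Your added consistency check against $H=8H_{\beta_1}+15H_{\beta_2}+21H_{\beta_3}+11H_{\beta_4}$ is a nice touch; the only cosmetic slip is that the relevant remark is Remark~1, not Remark~2.
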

\begin{proof}
The proof is parallel to that of Proposition 11, with $T_{\alpha_1}=H_{\beta_1}+2H_{\beta_2}+3H_{\beta_3}+2H_{\beta_4}$, $T_{\{\alpha_2,\alpha_6\}}=2H_{\beta_1}+3H_{\beta_2}+4H_{\beta_3}+2H_{\beta_4}$, $T_{\{\alpha_3,\alpha_5\}}=3H_{\beta_1}+6H_{\beta_2}+8H_{\beta_3}+4H_{\beta_4}$ and $T_{\alpha_4}=2H_{\beta_1}+4H_{\beta_2}+6H_{\beta_3}+3H_{\beta_4}$.
\end{proof}
Now the classification for complex symmetric pairs $(\mathfrak{g},\mathfrak{g}^\tau)$ of $\mathfrak{g}^\tau$-compatible parabolic subalgebras containing a $\tau$-stable Borel subalgebra  of $\mathfrak{g}$ is completely finished. Besides the pairs of type 1 and the pairs of type 2 with equal ranks, the classification for the six remaining pairs is given in Table 2. Note that the simple system of $\mathfrak{e}_6$ is given by the Cartan matrix in (\textbf{v}).
\begin{table}
\caption{$\mathfrak{g}^\tau$-compatible parabolic subalgebras $\mathfrak{p}_\Pi$ for complex symmetric pairs $(\mathfrak{g},\mathfrak{g}^\tau)$ with $\mathfrak{g}$ simple and unequal ranks}
\begin{center}
\begin{tabular}{|c|c|c|c|}
\hline
$\mathfrak{g}$&$\mathfrak{g}^\tau$&$\Delta$&$\Pi'\subseteq\Delta$ such that\\&&&either $\Pi'\subseteq\Pi$ or $\Pi'\cap\Pi=\phi$\\
\hline
$sl(n,\mathbb{C})$&$so(n,\mathbb{C})$&$\{\alpha_i-\alpha_{i+1}\mid1\leq i\leq n-1\}$&$\{\alpha_k-\alpha_{k+1},\alpha_{n-k}-\alpha_{n+1-k}\}$\\&&&for $1\leq k\leq [\frac{n}{2}]$\\
\hline
$sl(2n,\mathbb{C})$&$sp(2n,\mathbb{C})$&$\{\alpha_i-\alpha_{i+1}\mid1\leq i\leq2n-1\}$&$\{\alpha_k-\alpha_{k+1},\alpha_{2n-k}-\alpha_{2n+1-k}\}$\\&&&for $1\leq k\leq n-1$\\
\hline
&$so(2m-1,\mathbb{C})$&$\{\alpha_i-\alpha_{i+1}\mid1\leq i\leq m+n$&\\$so(2m+2n,\mathbb{C})$&$\oplus$&$-1\}\cup\{\alpha_{m+n-1}+\alpha_{m+n}\}$&$\{\alpha_{m+n-1}\pm\alpha_{m+n}\}$\\&$so(2n+1,\mathbb{C})$&&\\
\hline
$\mathfrak{e}_6$&$sp(8,\mathbb{C})$&$\{\alpha_i\mid1\leq i\leq6\}$&$\{\alpha_2,\alpha_6\}$ and $\{\alpha_3,\alpha_5\}$\\
\hline
$\mathfrak{e}_6$&$\mathfrak{f}_4$&$\{\alpha_i\mid1\leq i\leq6\}$&$\{\alpha_2,\alpha_6\}$ and $\{\alpha_3,\alpha_5\}$\\
\hline
\end{tabular}
\end{center}
\end{table}

\end{document}